\newcommand{\arxiv}[1]{\href{http://arxiv.org/abs/#1}{\tt arXiv:\nolinkurl{#1}}}
\newcommand{\arXiv}[1]{\href{http://arxiv.org/abs/#1}{\tt arXiv:\nolinkurl{#1}}}
\newcommand{\googlebooks}[1]{(preview at \href{http://books.google.com/books?id=#1}{google books})}
\definecolor{dark-red}{rgb}{0.7,0.25,0.25}
\definecolor{dark-blue}{rgb}{0.15,0.15,0.55}
\definecolor{medium-blue}{rgb}{0,0,.8}
\definecolor{DarkGreen}{RGB}{0,150,0}
\definecolor{rho}{named}{red}
\theoremstyle{plain}
\newtheorem{thm}{Theorem}[section]
\newtheorem*{thm*}{Theorem}
\newtheorem*{cor*}{Corollary}
\newtheorem*{conj*}{Conjecture}
\newtheorem{lem}[thm]{Lemma}
\newtheorem{prop}[thm]{Proposition}
\newtheorem*{quest*}{Question}
\newtheorem*{claim*}{Claim}
\theoremstyle{definition}
\newtheorem{defn}[thm]{Definition}
\newtheorem{ex}[thm]{Example}
\newtheorem{sub-ex}[thm]{Sub-Example}
\newtheorem{rem}[thm]{Remark}
\newtheorem*{rem*}{Remark}
\newcommand{\comment}[1]{}
\newcommand{\be}{\begin{enumerate}[label=(\arabic*)]}
\newcommand{\ee}{\end{enumerate}}
\def\semicolon{;}
\def\applytolist#1{
    \expandafter\def\csname multi#1\endcsname##1{
        \def\multiack{##1}\ifx\multiack\semicolon
            \def\next{\relax}
        \else
            \csname #1\endcsname{##1}
            \def\next{\csname multi#1\endcsname}
        \fi
        \next}
    \csname multi#1\endcsname}
\def\calc#1{\expandafter\def\csname c#1\endcsname{{\mathcal #1}}}
\def\bbc#1{\expandafter\def\csname bb#1\endcsname{{\mathbb #1}}}
\def\bfc#1{\expandafter\def\csname bf#1\endcsname{{\mathbf #1}}}
\def\sfc#1{\expandafter\def\csname s#1\endcsname{{\sf #1}}}
\def\fc#1{\expandafter\def\csname f#1\endcsname{{\mathfrak #1}}}
\newcommand{\noshow}[1]{}
\newcommand{\MR}[1]{}
\tikzset{
	super thick/.style={line width=3pt}
}
\tikzset{
    quadruple/.style args={[#1] in [#2] in [#3] in [#4]}{
        #1,preaction={preaction={preaction={draw,#4},draw,#3}, draw,#2}
    }
} 
\tikzstyle{shaded}=[fill=red!10!blue!20!gray!30!white]
\tikzstyle{unshaded}=[fill=white]
\tikzstyle{empty box}=[circle, draw, thick, fill=white, opaque, inner sep=2mm]
\tikzstyle{annular}=[scale=.7, inner sep=1mm, baseline]
\tikzstyle{rectangular}=[scale=.75, inner sep=1mm, baseline=-.1cm]
\tikzstyle{mid>}=[decoration={markings, mark=at position 0.5 with {\arrow{>}}}, postaction={decorate}]
\tikzstyle{mid<}=[decoration={markings, mark=at position 0.5 with {\arrow{<}}}, postaction={decorate}]
\tikzstyle{over}=[double, draw=white, super thick, double=]
\tikzstyle{shaded}=[fill=red!10!blue!20!gray!30!white]
\tikzstyle{unshaded}=[fill=white]
\tikzstyle{empty box}=[circle, draw, thick, fill=white, opaque, inner sep=2mm]
\tikzstyle{annular}=[scale=.7, inner sep=1mm, baseline]
\tikzstyle{rectangular}=[scale=.75, inner sep=1mm, baseline=-.1cm]
\definecolor{DarkGreen}{RGB}{0,150,0}
\definecolor{dark-blue}{rgb}{0.15,0.15,0.55}
\definecolor{medium-blue}{rgb}{0,0,0.65}
\definecolor{alpha}{named}{red}
\definecolor{beta}{named}{blue}
\tikzstyle{mid>}=[decoration={markings, mark=at position 0.5 with {\arrow{>}}}, postaction={decorate}]
\tikzstyle{mid<}=[decoration={markings, mark=at position 0.5 with {\arrow{<}}}, postaction={decorate}]
\tikzstyle{upper>}=[decoration={markings, mark=at position 0.8 with {\arrow{>}}}, postaction={decorate}]
\tikzstyle{upper<}=[decoration={markings, mark=at position 0.8 with {\arrow{<}}}, postaction={decorate}]
\tikzstyle{lower>}=[decoration={markings, mark=at position 0.2 with {\arrow{>}}}, postaction={decorate}]
\tikzstyle{lower<}=[decoration={markings, mark=at position 0.2 with {\arrow{<}}}, postaction={decorate}]
\newcommand{\mydotw}[1]{\begin{scope}[shift={#1}] \fill[shift only,white] (0,0) circle (1.5pt); \draw[shift only,thick]  (0,0) circle (1.5pt);   \end{scope}}
\title{\Large Triangle presentations and tilting modules for $\text{SL}_{2k+1}$}
\author{Corey Jones}
\begin{document}
\maketitle

\begin{abstract}
Triangle presentations are combinatorial structures on finite projective geometries which characterize groups acting simply transitively on the vertices of a locally finite building of type $\tilde{\text{A}}_{n-1}$ ($n\ge3$). From a type $\tilde{\text{A}}_{n-1}$ triangle presentation on a geometry of order $q$, we construct a fiber functor on the diagrammatic monoidal category $\text{Web}(\text{SL}^{-}_{n})$ over any field $\mathbbm{k}$ with characteristic $p\ge n-1$ such that $q \equiv 1$ mod $p$. When $\mathbbm{k}$ is algebraically closed and $n$ odd, this gives new fiber functors on the category of tilting modules for $\text{SL}_{n}$. 
\end{abstract}

\section{Introduction}

The purpose of this paper is to build new fiber functors on the category of tilting modules for $\text{SL}_{n}$ from combinatorial structures called \textit{triangle presentations}, which arise in the theory of affine buildings. Recall that affine buildings are a class of simplicial complexes which provide a geometric context for the study of semisimple algebraic groups over non-Archimedean local fields \cite{MR327923}. An important family of examples are the type $\tilde{\text{A}}_{n-1}$ Bruhat-Tits buildings associated to the groups $\text{PGL}(n,K)$, where $K$ is a field with discrete valuation $\nu$. 
The group $\text{PGL}(n,K)$ has a canonical action on its building which is transitive on vertices. It is natural to study discrete subgroups $\Gamma\le \text{PGL}(n,K)$ whose action on the vertices is \textit{simply} transitive. In this case, the building can be recovered as the flag complex of the Cayley graph of $\Gamma$ with respect to a set of generators which map any fixed vertex to its nearest neighbors. 

There is an abstract characterization of such $\Gamma$ in terms of \textit{triangle presentations} \cite{MR1320514}, \cite{MR1232965}. Triangle presentations are defined as purely combinatorial structures on finite projective geometries (Definition \ref{tridefn}). A group $\Gamma$ admits a type-rotating action on an abstract locally finite $\tilde{\text{A}}_{n-1}$ building which is simply transitive on the vertices if and only if it admits a triangle presentation of type $\tilde{\text{A}}_{n-1}$ \cite[Theorem 2.5]{MR1320514}. Triangle presentations may thus be viewed as combinatorial manifestations of the local structure of buildings in type $\tilde{\text{A}}$.

In this paper we establish a connection between the rich combinatorics of triangle presentations and type A representation theory. The category $\text{PolyWeb}(\text{GL}_{n})$ is a diagrammatic symmetric monoidal category defined over an arbitrary field $\mathbbm{k}$ which yields a presentation for the category of polynomial representations of $\text{GL}_{n}$ when $\mathbbm{k}$ is algebraically closed \cite[Definition 4.7, Remark 4.15]{MR4135417}. In this paper we consider a monoidal quotient of this category which we call $\text{Web}(\text{SL}^{-}_{n})$, obtained by adding an isomorphism from the determinant object in $\text{PolyWeb}(\text{GL}_{n})$ to the monoidal unit object and imposing certain compatibility conditions with the crossing generator (Section \ref{SLnQuotients}). When $n$ is odd and $\mathbbm{k}$ algebraically closed, the idempotent completion of $\text{Web}(\text{SL}^{-}_{n})$ is equivalent to the category of $\text{SL}_{n}$ tilting modules $\text{Tilt}(\text{SL}_{n})$ (Remark \ref{tilting}).

Recall a \textit{fiber functor} on a linear monoidal category is a monoidal functor \footnote{Our fiber functors are not necessarily braided.} to the category of finite dimensional vector spaces. The main result of this paper is Theorem \ref{TriangleSLnThm}, which we summarize as follows:

\medskip

\noindent (\textbf{Theorem} \ref{TriangleSLnThm}) Given a triangle presentation $\mathcal{T}$ of type $\tilde{\text{A}}_{n-1}$ on a finite projective geometry of order $q$, for any field $\mathbbm{k}$ of characteristic $p\ge n-1$ with $q\equiv 1 \ \text{mod}\ p$, we construct a fiber functor $\text{Web}(\text{SL}^{-}_{n})\rightarrow \text{Vec}$. When $n$ is odd and $\mathbbm{k}$ algebraically closed, these define fiber functors on $\text{Tilt}(\text{SL}_{n})$.

\medskip  

Our construction yields an infinite family of fiber functors on $\text{Tilt}(\text{SL}_{n})$ for a fixed $n$ and $\mathbbm{k}$. To our knowledge, these are the first examples of fiber functors on $\text{Tilt}(\text{SL}_{n})$, $n\ge 3$, with the interesting property that the underlying dimensions of the vector spaces assigned to objects are strictly larger than their usual dimension.\footnote{For $n=2$, there are many examples of this type, e.g. from a vector space $V$ whose dimension is $2$ mod p, together with a non-degenerate anti-symmetric bilinear form.} Indeed, for a fixed $n$ field $\mathbbm{k}$ with characteristic $p$, there are infinitely many primes $q\equiv 1\ \text{mod}\ p$, and thus infinitely many triangle presentations with which we can build our fiber functors. The linear dimensions of the vector spaces assigned by our functors to the defining representation, for example, are given by $[n]_{q}$, where $q$ is the order of the triangle presentation. This increases with $q$, and thus the linear dimension of these vector spaces can be arbitrarily large.

Recall a solution to the (parameter independent, quantum) Yang-Baxter equation consists of a vector space $V$ and an isomorphism $\check{R}:V\otimes V\rightarrow V\otimes V$ satisfying the equation$$
(\check{R}\otimes 1_{V})\circ (1_{V}\otimes \check{R})\circ (\check{R}\otimes 1_{V})= (1_{V}\otimes \check{R})\circ (\check{R}\otimes 1_{V})\circ (1_{V}\otimes \check{R})\ \text{in}\ \  \text{End}(V\otimes V\otimes V)
$$ Solutions to this equation have played an important role in statistical mechanics and the theory of quantum groups \cite{MR1062425}. In the latter context, they typically arise as $q$-deformations of the standard solution $P$, which swaps the order of the tensor factors. By considering the image of the crossing morphisms from $\text{Web}(\text{SL}^{-}_{n})$ under our (non-braided) fiber functor, we obtain new involutive ($\check{R}^{2}=\text{id}_{V\otimes V}$) solutions of the Yang-Baxter equation in positive characteristic. Our solutions can be interpreted as ``positive characteristic deformations'' of easy solutions in characteristic $0$ (Remark \ref{degenerate} and Section \ref{crossing}).

Our study of triangle presentations was inspired by and is closely related to \cite{MR3899967}, which uses $\tilde{\text{A}}_{2}$ triangle presentations to construct the first examples of genuinely quantum discrete quantum groups with property (T). Given a triangle presentation $\mathcal{T}$, if one considers the linear maps defining our functors over $\mathbbm{C}$, they no longer satisfy the $\text{SL}^{-}_{n}$ relations hence do not give a fiber functor of $\text{Web}(\text{SL}^{-}_{n})$. In this setting, we can upgrade our vector spaces to Hilbert spaces by asserting the basis elements $\Pi$ be orthonormal. Then these maps generate a rigid C*-tensor subcategory of finite dimensional Hilbert spaces. By Tannaka-Krein-Woronowicz duality this yields a corresponding compact quantum group. In the $n=3$ case, it is easy to see from the definitions that this is precisely the compact quantum group $\mathbbm{G}_{\mathcal{T}}$ introduced in \cite[Definition 5.1]{MR3899967} associated to $\mathcal{T}$. Another way to understand this connection is to say the planar algebra introduced in \cite[Section 7]{MR3899967} (which makes sense over $\mathbbm{Z}$) describing $\text{Rep}(\mathbbm{G}_{\mathcal{T}})$ satisfies the $\text{SL}_{3}$ web relations in when reduced modulo $p$ dividing $q-1$.

We briefly describe the outline of the paper. In Section \ref{prelims}, we cover background material including finite projective geometries, buildings, triangle presentations and examples. No knowledge of buildings is required to read this paper, but we include some basic definitions and discussion for the interested reader in Section \ref{buildings}. In Section \ref{categories} we describe the relevant web categories and some basic results concerning their diagrammatics. In Section \ref{functor}, we construct the functors and then discuss basic properties of the resulting Yang-Baxter solution $\check{R}$. The main technical part of the paper is the proof of Lemma \ref{squarelemma1}. This involves a combinatorial case analysis, and makes full use of all the axioms of triangle presentations. 

\subsection*{Acknowledgements}

We would like to thank Pavel Etingof, Aaron Lauda, Victor Ostrik, David Penneys, and Sean Sanford for useful comments and discussions. We thank Emily McGovern for discovering numerous typos. This research was supported by NSF Grant DMS-1901082.

\section{Preliminaries: Projective geometry, buildings, and triangle presentations}\label{prelims}

Here we recall some basic definitions and results about finite projective geometries, buildings, and triangle presentations. As triangle presentations can be defined purely as a combinatorial structure on a finite projective geometry, background in the theory of buildings in not necessary to read this paper. However, we include here the basic definitions, since the connection between triangle presentations and groups acting on buildings is their raison d'etre.

\subsection{Finite projective geometry}

Finite projective geometries are examples of \textit{incidence geometries}. They consist of a finite set of points $P$, a finite set of lines $L$, and an \textit{incidence relation} between points and lines. For a comprehensive general reference on incidence geometries, we refer the reader to the notes \cite{Moorhouse07}.

A finite projective geometry of projective dimension $n$ is an incidence geometry satisfying the following axioms (called the Veblen-Young axioms, for example \cite[pp.127]{Moorhouse07}):

\begin{enumerate}

\item
For every two distinct points $p,q\in P$ there is a unique line $p\vee q$ incident with both $p$ and $q$
\item
There exists three non-collinear points
\item
Every line is incident with at least $3$ points.
\item
The maximum length of a chain of (non-empty) subspaces is $n+1$
\item
Every line incident with two sides of a triangle but not incident with its vertices must be incident with the third side of the triangle
\end{enumerate}

\noindent Here \textit{subspace} is a collection of points such that for any two points the line containing them is also in the collection (we consider singleton sets of points subspaces). In the definition of chains of subspaces, we assume inclusions are proper and do not count $\varnothing$ as a subspace.

Given a subspace $V$, we can define the \textit{algebraic dimension} to be the largest $k$ so that there exists a chain of proper inclusions of (proper) subspaces $V_{0}\subseteq V_{1}\subseteq \cdots \subseteq V_{k-1}\subseteq V$, while the \textit{projective dimension} is given by $k-1$. Directly from the definitions we see the projective dimension of the entire set $P$ is $n$, and the algebraic dimension is $n+1$. It's not hard to show that all lines have the same number of points, and the \textit{order} $q$ of a projective geometry is defined to be one less than the number of points on a line.

The terminology ``algebraic dimension" stems from the canonical examples of finite projective geometries. Let $V$ be a vector space of dimension $n+1$ over a finite field $\mathbbm{F}_{q}$. The points in this projective geometry are the 1-dimensional subspaces, and the lines are 2-dimensional subspaces. Projective geometries arising from a finite vector space in this way are called \textit{Desarguesian}, or \textit{classical}. In this case, the subspaces of \textit{algebraic dimension} $k$ are precisely the linear subspaces of $V$ with (linear) dimension $k$. The order in this case is the size of the field $q$. We note that projective geometries with projective dimension $n\ge 3$ are always classical, while there are many exotic examples of projective planes (n=2).

In what follows, we will find it more convenient to index subspaces by their algebraic dimension rather than their geometric dimension. However, we will continue to specify algebraic dimension to avoid confusion. Given a projective geometry, we define $\Pi_{k}$ to be the set of subspaces of algebraic dimension $k$, so that the set of points is written $\Pi_{1}$, the set of lines is $\Pi_{2}$, etc. 

Let $q\ne 1$ be a fixed positive integer. Then for $k$ a positive integer, define $$[k]_{q}:=\frac{q^{n}-1}{q-1}=1+q+q^{2}+\cdots + q^{k-1}.$$ Then set $[0]_{q}=1$ and recursively define $[k+1]_{q}!:=[k+1]_{q}[k]_{q}!.$ For $l\le k$ the $q$-binomial coefficient is given by $$\left[ {\begin{array}{c}
   k  \\
   l  \\
  \end{array} } \right]_{q}
:=\frac{[k]_{q}!}{[l]_{q}![k-l]_{q}!}$$

\noindent We adopt the conventions $$[-k]_{q}=-[k]_{q}\ \ \text{and}\ \ [-k]_{q}!=[-k]_{q}[-(k-1)]_{q}\cdots [-1]_{q}=(-1)^{k}[k]_{q},$$ 

\noindent and then use the same definition for the binomial coefficients with (possibly) negative integers. The following easy lemma is well known (for example, \cite[pp.121]{Moorhouse07})

\begin{lem}\label{subspacenumber} For a finite projective geometry of order $q$ and algebraic dimension $n$ the number of subspaces of algebraic dimension $k$ is $\left[ {\begin{array}{c}
   n  \\
   k  \\
  \end{array} } \right]_{q}$.

\end{lem}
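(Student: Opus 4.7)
The plan is to prove the count by induction on $n$, using the two-way double count of incident pairs between points and $k$-dimensional subspaces. The scaffolding requires showing two basic closure properties, which together allow one to pass back and forth between ambient counts and counts on sub-geometries.

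First I would establish two structural lemmas. Structural Lemma A: if $V$ is a subspace of algebraic dimension $m$, then $V$, equipped with the points and lines of the ambient geometry that lie inside it, is itself a finite projective geometry of algebraic dimension $m$ and the same order $q$. This amounts to checking the Veblen-Young axioms restricted to $V$; the only non-trivial ones are Pasch (axiom 5), inherited verbatim, and the order, which is inherited because a line of the ambient geometry contained in $V$ has the same point set in $V$ as outside. Structural Lemma B: for a subspace $V$ of algebraic dimension $k$, the \emph{residue} at $V$, whose ``points'' are the $(k+1)$-dimensional subspaces containing $V$ and whose ``lines'' are the $(k+2)$-dimensional subspaces containing $V$ (with the obvious incidence), is a finite projective geometry of algebraic dimension $n-k$ and order $q$. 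To see the order is $q$, one checks that any $(k+2)$-dimensional subspace $W$ containing $V$ contains exactly $q+1$ intermediate $(k+1)$-dimensional subspaces, which follows by picking any point $p\in W\setminus V$ and analyzing the lines through $p$ in $W$.

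Next I would compute $|\Pi_1|=[n]_q$ by induction. Fix any point $p$; by Structural Lemma B the residue at $p$ is a projective geometry of algebraic dimension $n-1$ and order $q$, whose points are the lines of the ambient geometry through $p$. By induction there are $[n-1]_q$ such lines, each containing $q+1$ points and meeting the others only at $p$, so $|\Pi_1| = 1 + q\cdot [n-1]_q = [n]_q$. With this in hand I would double-count pairs $(p,V)$ with $p\in V$, $V\in\Pi_k$: fixing $p$ and applying the residue description (Structural Lemma B) plus the inductive hypothesis gives $|\Pi_1|\cdot\binom{n-1}{k-1}_q$ pairs, while fixing $V$ and applying Structural Lemma A with the already-established point count gives $|\Pi_k|\cdot [k]_q$ pairs. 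Equating and rearranging produces $|\Pi_k| = \binom{n}{k}_q$, completing the induction.

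I expect the main obstacle to be Structural Lemma B, specifically verifying Pasch and the order condition in the residue. The subtlety is that ``lines'' in the residue are $(k+2)$-subspaces and one must check that a ``line'' connecting two ``points'' is uniquely determined and shares the same cardinality across residues; both of these rest on showing that the join of two $(k+1)$-subspaces containing $V$ has algebraic dimension exactly $k+2$, which in turn uses the modular-law-like behavior that Veblen-Young forces on finite projective geometries. The base cases $n=2$ (a single line of $q+1$ points, trivially matching $\binom{2}{1}_q=q+1$ and $\binom{2}{2}_q=1$) and $n=3$ (a projective plane, where the standard incidence count of $q^2+q+1=[3]_q$ points and lines is classical) are straightforward, and once the structural lemmas are in place the inductive step above finishes the argument; alternatively, for $n\ge 4$ one can invoke the Veblen-Young classification mentioned in the paper to reduce directly to the vector-space count, but the inductive argument has the advantage of being uniform across all $n$.
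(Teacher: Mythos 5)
Your argument is correct, but it takes a genuinely different route from the paper, which in fact offers almost no proof at all: it simply observes that for $n\ge 4$ every such geometry is classical (Desarguesian) by the Veblen--Young classification, so the count reduces to counting $k$-dimensional linear subspaces of $\mathbbm{F}_q^{\,n}$, and for $n=3$ it cites a standard fact about projective planes from \cite[Theorem 6.3]{Moorhouse07}. Your inductive double count of incident pairs $(p,V)$, supported by the two structural lemmas (subspaces are subgeometries of the same order; residues are geometries of algebraic dimension $n-k$ and the same order), is uniform in $n$ and avoids the classification entirely, at the cost of having to develop the residue machinery --- in particular the modular dimension formula $\dim(U\vee U')+\dim(U\cap U')=\dim U+\dim U'$ needed to pin down the order of a residue. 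One small point to tidy up: residues and subspaces of algebraic dimension $\le 2$ are degenerate (a single point or a single line fails axiom 2 of the Veblen--Young list as stated in the paper), so your induction should either be phrased so that these degenerate cases are admitted with the obvious counts $[1]_q=1$ and $[2]_q=q+1$, or should bottom out before reaching them; you gesture at this with your $n=2$ base case, and it is easily handled. Note also that your intermediate-subspace count $\binom{n-1}{k-1}_q$ through a fixed point is essentially the paper's Lemma \ref{intermediatesubspacenumber}, so your argument has the pleasant side effect of proving both lemmas simultaneously rather than deriving the second from the first.
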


For $n\ge 4$, since all such geometries are classical this is an exercise in linear algebra. For $n=3$, this is a basic fact about projective planes (for example, \cite[Theorem 6.3]{Moorhouse07}).

\begin{lem}\label{intermediatesubspacenumber} For a finite projective geometry of order $q$ and algebraic dimension $n$ the number of subspaces of algebraic dimension $k$ containing a fixed subspace $V$ of dimension $m$ is $\left[ {\begin{array}{c}
   n-m  \\
   k-m  \\
  \end{array} } \right]_{q}$.

\end{lem}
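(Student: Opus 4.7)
The plan is to reduce the count to Lemma \ref{subspacenumber} applied to a smaller projective geometry by taking a ``residue'' at $V$. Concretely, I would mimic exactly the case split used to establish Lemma \ref{subspacenumber}: the Desarguesian case (which handles all $n\ge 4$) via a quotient-space argument, and a small finite check for projective planes ($n=3$).

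In the classical case, realize the geometry as the subspace lattice of an $n$-dimensional $\mathbbm{F}_{q}$-vector space $W$, and let $V\subseteq W$ be the $m$-dimensional linear subspace corresponding to our fixed subspace. The quotient map $\pi\colon W\to W/V$ induces an order-preserving bijection
\[
\{U\subseteq W : V\subseteq U,\ \dim U=k\}\ \longleftrightarrow\ \{U'\subseteq W/V : \dim U'=k-m\},
\]
given by $U\mapsto U/V$ with inverse $U'\mapsto \pi^{-1}(U')$. Since $W/V$ has linear dimension $n-m$, and the projective geometry it determines again has order $q$ (each line is a $2$-dimensional $\mathbbm{F}_{q}$-subspace, hence carries $q+1$ points), Lemma \ref{subspacenumber} identifies the right-hand side with $\left[ {\begin{array}{c} n-m \\ k-m \\ \end{array}} \right]_{q}$.

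For $n=3$ the geometry is a projective plane which may be exotic, so the linear-algebra argument is unavailable and a direct case analysis is needed. The only nontrivial cases are $m=1,k=2$ (lines through a point, a standard count giving $q+1=[2]_{q}$), together with $m=k$ and the cases $k=3$ where the count is $1$, all matching the asserted $q$-binomial. This finishes the proof.

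The only mild obstacle is the bookkeeping between algebraic and projective dimension in the quotient argument, together with the verification that $W/V$ inherits the same order $q$; both are routine once set up carefully. No new idea beyond the residue construction is required.
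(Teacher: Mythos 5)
Your proof is correct and follows essentially the same route as the paper: for $n\ge 4$ the paper likewise passes to the quotient $W/V$ via the isomorphism theorems to get a bijection with $(k-m)$-dimensional subspaces of an $(n-m)$-dimensional space and then invokes Lemma \ref{subspacenumber}, and for $n=3$ it likewise defers to standard facts about projective planes. No substantive difference.
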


\begin{proof}
For $n=3$, this again is a basic fact from the theory of projective planes. For $n\ge 4$, projective geometries are classical, and thus subspaces are linear subspaces of an $n$ dimensional vector space $W$ over $\mathbbm{F}_{q}$, whose algebraic dimension is their linear dimension. The first isomorphism theorem for vector space establishes a bijection between the intermediate subspaces $U$, $V\subseteq U\subseteq W$, of dimension $k$, and arbitrary subspaces of the $n-m$ dimensional quotient space $W/V$ of dimension $k-m$. Applying the previous Lemma gives the result.
\end{proof}

Finally, an easy fact we will make frequent use of is if $q\equiv 1\ \text{mod}\ p$  $$\left[ {\begin{array}{c}
   n  \\
   k  \\
  \end{array} } \right]_{q}\equiv \left( {\begin{array}{c}
   n  \\
   k  \\
  \end{array} } \right)\ \text{mod}\ p$$

\subsection{Buildings}\label{buildings}

Ultimately for our construction we don't need the full apparatus of buildings, we only need the combinatorial properties of triangle presentations. However, the origins of triangle presentations lie in the study of symmetries of buildings and we believe this geometric context is an interesting aspect of our construction. Therefore we find it prudent to take a brief detour to discuss buildings for the unfamiliar reader. We refer to the reader to \cite{MR2560094}, \cite{MR2439729} for a comprehensive reference for the topics discussed below.

Buildings, originally introduced by Tits, are a family of highly symmetric simplicial complexes whose geometric realizations have nice properties. For semisimple algebraic groups $G$ over non-Archimedean local fields, they play a role analogous to homogeneous spaces. In particular, such a group admits a vertex transitive action on its corresponding Bruhat-Tits building, allowing for the application of geometric methods to study $G$.

More concretely, buildings are simplicial complexes which are modeled on \textit{Coxeter complexes}. Indeed, Coxeter complexes are precisely the degenerate (or thin) buildings, forming the fundamental pieces of (thick) buildings which will be of primary interest. \textit{Coxeter complexes} are simplicial complexes associated to a Coxeter system $(W,S)$ on which the underlying group $W$ acts in a nice way. As Coxeter complexes are widely studied throughout mathematics, we neglect to include  definitions but refer the reader to \cite[Chapters 2, 3]{MR2439729}, which is especially useful for the theory of buildings.

\begin{defn}\label{buildingdef}

If $(W,S)$ is a Coxeter system, a \textit{building} of type $(W,S)$ is a simplicial complex $\Delta$ which is a union of subcomplexes called \textit{apartments} satisfying the following conditions:

\begin{enumerate}
    \item 
    Each apartment is a Coxeter complex of type $(W,S)$.
    \item
    Any two simplices are contained in a common apartment.
    \item
    If two simplices $S,T$ are both contained in apartments $A,A^{\prime}$, there is an isomorphism of chamber complexes $\phi:A\rightarrow A^{\prime}$ that fixes $S,T$ point-wise.

\end{enumerate}

\end{defn}

A chamber is a maximal simplex. Often in the literature, there is an additional assumption that the building is \textit{thick}, which means every codimenion $1$ simplex of a chamber is incident with at least 3 distinct chambers. Otherwise the building is called \textit{thin}. Coxeter complexes themselves are precisely the thin buildings.
 
We are interested in buildings of type $\tilde{\text{A}}_{n-1}$, $n\ge 3$. The associated Coxeter complexes are tessellations of Euclidean space by simplices. A large class of examples are the buildings $\Delta^{n}_{K}$ associated to a field $K$ with discrete valuation $\nu: K\rightarrow \mathbbm{Z}$. These are the Bruhat-Tits buildings associated to the group $\text{PGL}(n,K)$. We will provide here an elementary construction following \cite[Section 9.2]{MR2560094}.

Let $\mathcal{O}=\{x\in K\ :\ \nu(x)\ge 0\}$ denote the discrete valuation ring. Pick $\pi\in \mathcal{O}$ with $\nu(\pi)=1$. Then $\pi$ generates the unique maximal ideal in $\mathcal{O}$. In order to obtain a locally finite building, we assume the residue field $\mathcal{O}/\mathcal{O}\pi $ is finite. A \textit{lattice} in $K^{n}$ is a free rank $n$ $\mathcal{O}$ submodule $L\subseteq K^{n}$. We say two lattices $L,L^{\prime}$ are \textit{equivalent} if $\lambda L=L^{\prime}$ for some $\lambda\in K$. The vertices of $\Delta^{n}_{K}$ are equivalence classes of lattices. Two classes will have an edge between them if there exists{} representatives $L,L^{\prime}$ respectively such that $\pi L \subsetneq L^{\prime}\subsetneq L$. This gives us a graph, and the simplicial complex $\Delta^{n}_{K}$ is defined as the flag complex of this graph. In other words, the simplices of $\Delta^{n}_{K}$ are subsets of vertices such that every pair of elements has an edge between them. These buildings naturally admit an action of $\text{PGL}(n,K)$ which is transitive on the vertices. For $n\ge 4$, every $\tilde{\text{A}}_{n-1}$ building is isomorphic to one of this form.

Every $\tilde{\text{A}}_{n-1}$ building admits an essentially unique type function $\tau$ from the set of vertices to $\mathbbm{Z}/ n\mathbbm{Z}$. In the Bruhat-Tits example described above, we set $L_{0}:=\mathcal{O}^{n}\subseteq K^{n}$. Then any other lattice can be written as $gL_{0}$ for some $g\in \text{GL}(n,K)$. We can define the type function $\tau([gL_{0}]):\equiv \nu(\text{det}(g)) \ \text{mod}\ n\  \in \mathbbm{Z}/n\mathbbm{Z}$, which is easily seen to be well defined. An automorphism of a building $\alpha$ is said to be \textit{type rotating} if there exists a $c\in \mathbbm{Z}/n\mathbbm{Z}$ such that $\tau(\alpha(v))\equiv c+\tau(v)\ \text{mod}\ n$ for all vertices $v$ of the building. $\text{PGL}(n,K)$ acts on $\Delta_{K}$ by type rotating automorphisms.
 
 One important feature of buildings of type $\tilde{\text{A}}_{n-1}$ is that the collection of vertices neighboring a given vertex v (the link of the vertex) naturally has the structure of a type $\text{A}_{n-1}$ building, which in turn always carries the structure of a projective geometry of algebraic dimension $n$. If the building is locally finite, then this is a finite projective geometry.

 As explained in the introduction, if a connected, locally finite building $\Delta$ admits an action by a group $\Gamma$ which is simply transitive on the set of vertices, then picking a vertex $v$ we can identify the vertices adjacent to $v$ with a set of generators for $\Gamma$. We can then use the combinatorial axioms of the building to work out a presentation of the group in terms of these generators. Collecting properties of such a presentation for arbitrary buildings of type $\tilde{\text{A}}_{n-1}$ leads naturally to the definition of a triangle presentation over a finite projective geometry. First we give some notation and conventions.
 
 Let $\Pi$ be a finite projective geometry of algebraic dimension $n$. We use the notation $$\displaystyle \Pi:=\bigsqcup_{1\le k\le n} \Pi_{k}.$$ We sometimes abuse notation, and denote the projective geometry simply by $\Pi$. For $u\in \Pi$, we also introduce the notation $\text{dim}(u)$ for the \textit{algebraic dimension} of $u$. For subspaces $u,v\in \Pi$, we say $u$ \textit{is incident with} $v$, written $u\sim v$ if either $u\subseteq v$ or $v\subseteq u$. The following definition is from \cite{MR1320514}.

\begin{defn}\label{tridefn}
Let $\Pi$ be a finite projective geometry of algebraic dimension $n$ and $\sigma:\Pi\rightarrow \Pi$ an involution such that $\sigma(\Pi_{k})=\Pi_{n-k}$. A \textit{triangle presentation of type $\tilde{\text{A}}_{n-1}$} compatible with $\sigma$ consists of a collection $\mathcal{T}\subseteq \Pi\times \Pi \times \Pi$ satisfying the following conditions:

\begin{enumerate}
    \item\label{incidence}
    For $u,v\in \Pi$, $(u,v,w)\in \mathcal{T}$ for some $w$ if and only if $\sigma(u)$ and $v$ are distinct and incident.
    \item\label{cyclicinvariance}
    For $u,v,w\in \Pi$, $(u,v,w)\in \mathcal{T}$ if and only if $(v,w,u)\in \mathcal{T}$
    \item\label{mod}
    If $(u,v,w)\in \mathcal{T}$, then $\text{dim}(u)+\text{dim}(v)+\text{dim}(w)= 0\ \text{mod}\ n $.
    \item\label{uniqueness}
    If $(u,v,w_{1})\in \mathcal{T}$ and $(u,v,w_{2})\in \mathcal{T}$, then $w_{1}=w_{2}$.
    \item\label{lambdareflection}
    If $(u,v,w)\in \mathcal{T}$, then $(\sigma(w), \sigma(v), \sigma(u))\in \mathcal{T}$.
    \item\label{theweirdone}
    If $(u_{1},v_{1},w)\in \mathcal{T}$, $(u_{2}, v_{2}, \sigma(w))\in \mathcal{T}$ and $\text{dim}(u_{i})+\text{dim}(v_{i})<n$, then there exists a unique $z$ such that $(v_{2},u_{1}, z)\in \mathcal{T}$ and $(v_{1}, u_{2}, \sigma(z))\in \mathcal{T}$.
\end{enumerate}

 \end{defn}

\begin{prop}\label{betterweirdone} Condition \ref{theweirdone} in the above definition can be replaced by either of the following conditions:
\end{prop}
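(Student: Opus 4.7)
The plan is to establish three-way equivalence among condition \ref{theweirdone} and the two proposed alternatives, always assuming axioms (\ref{incidence})--(\ref{lambdareflection}) hold. Rather than prove a cyclic chain of implications, I would handle the two alternatives one at a time, each via a pair of direct implications with condition \ref{theweirdone}; this is cleaner because each implication reduces to translating the hypotheses into the form required by the target condition using the symmetries already built into the axioms.

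The primary tool in each direction is a ``rewriting dictionary'' for triples in $\mathcal{T}$, built from cyclic invariance (condition \ref{cyclicinvariance}) and σ-reflection (condition \ref{lambdareflection}). Given a pair of triples $(u_1,v_1,w)$ and $(u_2,v_2,\sigma(w))$ as in condition \ref{theweirdone}, I can produce any cyclic rotation of each, and by applying condition \ref{lambdareflection} I can trade a triple involving $w$ for one involving $\sigma(w)$ with entries in reverse σ-flipped order. Depending on the exact form of the alternative, I would use these moves to realign the roles of the hypothesis triples so that the alternative's premise is met, extract its asserted witness, and then invert the rewriting to produce the $z$ demanded by condition \ref{theweirdone} (or conversely, starting from an alternative's witness, reshape it into the pair $(v_2,u_1,z),(v_1,u_2,\sigma(z))$). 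Uniqueness of the witness in each setting is routine: once one of the two output triples is fixed, condition \ref{uniqueness} pins down its third entry, and the σ-reflection of the other output is then forced.

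The main technical obstacle is bookkeeping around condition \ref{incidence} and the dimension constraint $\dim(u_i)+\dim(v_i)<n$. Two things must be checked at every step of the rewriting: first, that the intermediate triples we claim lie in $\mathcal{T}$ really do, which via condition \ref{incidence} reduces to verifying the appropriate pairs $\sigma(\cdot),\cdot$ are distinct and incident, with condition \ref{mod} ensuring the dimensions sum correctly modulo $n$; second, that the dimension inequality in the hypothesis of condition \ref{theweirdone} transports correctly to whatever dimension restriction appears in the alternative (and vice versa), since σ reverses $\dim$ via $\dim(\sigma(x)) = n-\dim(x)$ and a naive translation could flip an inequality. I expect each alternative to be chosen so that this transport is clean, but verifying this explicitly in each case is the step that must be written out carefully. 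With this dictionary in hand, the equivalences should reduce to short symmetry arguments.
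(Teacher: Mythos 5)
Your plan is essentially the paper's proof: the equivalence of \ref{theweirdone} with $6'$ is obtained there by an explicit relabeling ($u\mapsto u_1$, $v\mapsto v_1$, $r\mapsto\sigma(w)$, $s\mapsto\sigma(v_2)$, $w\mapsto u_2$) together with cyclic invariance (condition \ref{cyclicinvariance}) alone --- condition \ref{lambdareflection} is not actually needed --- and then $6'\Leftrightarrow 6''$ is handled by one more such realignment, with uniqueness following from condition \ref{uniqueness} exactly as you say. The one substantive step you defer, the transport of the dimension inequalities, is where the paper's proof does its real work: it uses condition \ref{mod} plus the bound $\dim\le n-1$ to force each relevant dimension sum to equal $n$ rather than $2n$ (e.g.\ $\dim(r)=\dim(u)+\dim(v)$), from which the two forms of the inequality are seen to be equivalent; this does go through cleanly, so your approach is correct, but that verification is the only non-formal content of the proposition and should be written out.
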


\begin{enumerate}

\item[$6^{\prime}$.]\label{betterweirdone1}
If $(u,v,\sigma(r)),(r,w,\sigma(s))\in \mathcal{T}$ and $\text{dim}(u)+\text{dim}(v)+\text{dim}(w)<n$, then there exists a unique $t\in \Pi$ such that $(u,t,\sigma(s)),(v,w,\sigma(t))\in \mathcal{T}$.

\item[$6^{\prime\prime}$.]\label{betterweirdone2}
If $(u,t,\sigma(s)),(v,w,\sigma(t))\in \mathcal{T}$ and $\text{dim}(u)+\text{dim}(v)+\text{dim}(w)<n$, then there exists a unique $r\in \Pi$ such that $(u,v,\sigma(r)),(r,w,\sigma(s))\in \mathcal{T}$.

 \end{enumerate}

 \begin{proof}
First we show \ref{theweirdone} is equivalent to $6^{\prime}$, assuming all the other conditions. If we perform the substitution $u\mapsto u_{1},v\mapsto v_{1},r\mapsto \sigma(w),s\mapsto \sigma(v_{2}), w\mapsto u_{2}$ and use $\ref{cyclicinvariance}$, then the statements are precisely the same, ignoring the conditions on the dimensions. Therefore it remains to show the assumptions on dimensions in $\ref{theweirdone}$ and $6^{\prime}$ respectively are equivalent. Under the substitution described above, supposing $\text{dim}(u_{i})+\text{dim}(v_{i})<n$ as in \ref{theweirdone}, then $\text{dim}(u)+\text{dim}(v)<n$ and $\text{dim}(w)+\text{dim}(\sigma(s))<n$. But note $\text{dim}(r)=\text{dim}(u)+\text{dim}(v)\ \text{and}\ \text{dim}(r)+\text{dim}(w)+\text{dim}(\sigma(s))\equiv 0\ \text{mod}\ n$ and thus $\text{dim}(w)+\text{dim}(\sigma(s))<n$ implies $\text{dim}(u)+\text{dim}(v)+\text{dim}(w)=\text{dim}(r)+\text{dim}(w)<n$ as desired. 

Conversely if we suppose $\text{dim}(u)+\text{dim}(v)+\text{dim}(w)<n$, then under the substitution described above we see $\text{dim}(\sigma(w))+\text{dim}(u_{2})=\text{dim}(u_1)+\text{dim}(v_1)+\text{dim}(u_{2})<n$ which immediately implies $\text{dim}(u_1)+\text{dim}(v_1)<n$. Also, since $\text{dim}(\sigma(w))+\text{dim}(u_{2})+\text{dim}(v_{2})\equiv 0\ \text{mod}\ n$ and the sum of the first two terms is less than $n$, then the whole sum must be $n$ hence the sum of the second two terms is less than $n$ as desired.

We will show $6^{\prime}$ implies $6^{\prime\prime}$, assuming the other properties of a triangle presentation. The other direction of implication is completely analogous. Assume $6^{\prime}$, and let $(u,t,\sigma(s))$ and $(v,w,\sigma(t))$ be elements of $\mathcal{T}$ with $\text{dim}(u)+\text{dim}(v)+\text{dim}(w)<n$. Then $(t,\sigma(s),u)\in \mathcal{T}$ and $\text{dim}(v)+\text{dim}(w)+\text{dim}(\sigma(s))=n-\text{dim}(u)<n$, thus we can apply the hypothesis to the pair $(v,w,\sigma(t))$ and $(t,\sigma(s),u)$ in $\mathcal{T}$ (with $t$ playing the role of $r$ in the statement) to obtain a unique $r^{\prime}$ such that $(v,r^{\prime},u)$ and $(w, \sigma(s), \sigma(r^{\prime}))$ are both elements of $\mathcal{T}$. Writing $r=\sigma(r^{\prime})$ gives the desired result.
 \end{proof}

\bigskip

\noindent Given a triangle presentation $\mathcal{T}$, we define a group $$\Gamma_{\mathcal{T}}=\langle u\in \Pi\ |\ uvw=1\ \text{if}\ (u,v,w)\in \mathcal{T},\ u\sigma(u)=1\rangle.$$ This group will always admit a type-rotating action on a type $\tilde{\text{A}}_{n-1}$ building whose action on the 1-skeleton is precisely the action on the Cayley graph of $\Gamma_{\mathcal{T}}$. Conversely, any group which acts (in a type rotating way) on a type $\tilde{\text{A}}_{n-1}$ building admits a triangle presentation such that the action on the $1$-skeleton is isomorphic to the action on its Cayley graph. In this sense, triangle presentations give a combinatorial characterization of groups acting simply transitively on the vertices of $\tilde{\text{A}}_{n-1}$ buildings. We summarize this in the statement of the following theorem due to Cartwright, and generalizing the earlier results of \cite{MR1232965} in the case $n=3$.

\begin{thm}\cite[Theorem 2.5]{MR1320514} A group $\Gamma$ admits a type-rotating action on a locally finite thick building of type $\tilde{\text{A}}_{n-1}$ which is simply transitive on the vertices if and only if $\Gamma$ admits a triangle presentation of type $\tilde{\text{A}}_{n-1}$.
\end{thm}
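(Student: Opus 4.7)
I would prove both directions of the equivalence --- constructing a building from a triangle presentation, and extracting a triangle presentation from a simply transitive type-rotating action --- using the local combinatorial structure at a single vertex as the bridge.

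For the direction (triangle presentation $\Rightarrow$ building), set $\Gamma = \Gamma_{\mathcal{T}}$ and consider its Cayley graph with respect to the generating set $\Pi$. A $\mathbb{Z}/n\mathbb{Z}$-valued type function on vertices is well defined by $\tau(\gamma) \equiv \sum_{i} \text{dim}(u_i) \bmod n$ for $\gamma = u_1 \cdots u_k$, independent of the factorization because axiom (\ref{mod}) forces every defining relation to be type-preserving. The triangle relations $uvw = 1$ with $(u,v,w) \in \mathcal{T}$ give 2-simplices, and taking the flag complex of the resulting 2-dimensional structure produces a simplicial complex whose link at the identity is canonically identified with the projective geometry $\Pi$. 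The real work is verifying Definition \ref{buildingdef}: the complex must be covered by apartments isomorphic to the $\tilde{\text{A}}_{n-1}$ Coxeter complex (a triangulation of Euclidean $(n-1)$-space), and any two simplices must lie in a common apartment with the retraction property. Axiom (\ref{theweirdone}) provides exactly the combinatorial input needed: it guarantees that partial parallelogram configurations in a prospective apartment can be completed uniquely.

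For the reverse direction (simply transitive action $\Rightarrow$ triangle presentation), fix a vertex $v_0 \in \Delta$ and identify $\Gamma$ with the vertex set via $\gamma \leftrightarrow \gamma v_0$. As noted in Section \ref{buildings}, the link of $v_0$ carries the structure of a finite projective geometry $\Pi$ of algebraic dimension $n$, and its elements correspond to the vertices adjacent to $v_0$; by simple transitivity these are in bijection with a generating set for $\Gamma$. The involution $\sigma$ is defined by inversion in $\Gamma$, and the condition $\sigma(\Pi_k) = \Pi_{n-k}$ follows from type-rotation. Define $(u,v,w) \in \mathcal{T}$ to mean $uvw = 1$ in $\Gamma$. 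Then axiom (\ref{incidence}) encodes that two vertices adjacent to $v_0$ lie in a common chamber precisely when the corresponding subspaces are distinct and incident; axioms (\ref{cyclicinvariance}) and (\ref{uniqueness}) follow from simple transitivity combined with cyclic invariance of the relations; (\ref{mod}) from the type function; and (\ref{lambdareflection}) from taking inverses of relations, since $(uvw)^{-1} = \sigma(w)\sigma(v)\sigma(u)$.

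The main obstacle in both directions is axiom (\ref{theweirdone}). In the reverse direction, the hypothesis $\text{dim}(u_i)+\text{dim}(v_i)<n$ places the relevant configuration in a "flat" region of some apartment where Euclidean parallelogram closure applies; the completing element $z$ is obtained by traversing the closing parallelogram in the apartment, and uniqueness comes from simple transitivity. In the forward direction, (\ref{theweirdone}) is the local completion principle one uses inductively to build up enough apartments to cover the complex and to verify the two-apartment axiom in Definition \ref{buildingdef}. I expect the apartment axioms, and especially the claim that any two simplices lie in a common apartment, to be the most delicate part of the argument; the case $n=3$ of \cite{MR1232965} provides a prototype, but the general case requires a genuinely higher-dimensional induction that exploits the interplay between (\ref{theweirdone}) and the projective geometry structure on each link.
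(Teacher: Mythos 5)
The paper does not prove this statement: it is quoted verbatim from Cartwright \cite[Theorem 2.5]{MR1320514} (generalizing the $n=3$ case of \cite{MR1232965}), and no argument for it appears anywhere in the text. So there is no ``paper's own proof'' to compare against; what you have written can only be measured against the strategy of the cited reference, which it does in fact follow: Cayley graph plus flag complex in the forward direction, link-of-a-vertex plus identification of neighbours with generators in the reverse direction. Your bookkeeping for the easy axioms is essentially right (though note that axiom (\ref{uniqueness}) is immediate from $w=(uv)^{-1}$ and needs neither simple transitivity nor anything geometric), and the observation that the type function is well defined because axiom (\ref{mod}) makes every defining relation type-preserving is the correct way to handle type-rotation.

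The genuine gap is that the forward direction's core --- showing the flag complex of the Cayley graph actually \emph{is} a building, i.e.\ exhibiting an apartment system satisfying all three conditions of Definition \ref{buildingdef} --- is not argued but only announced as ``the most delicate part.'' That verification is essentially the entire content of Cartwright's paper: one must first show every link is a spherical building (the link at the identity is the flag complex of $\Pi$ by axiom (\ref{incidence}), but the links at \emph{edges and higher simplices} also need checking, and this is where axiom (\ref{theweirdone}) enters, not merely in ``completing parallelograms in a prospective apartment''), and then invoke or reprove a local-to-global criterion (a chamber complex whose links are buildings and which is simply connected in the appropriate sense is a building) to avoid constructing apartments by hand. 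Your sketch conflates these two steps. Similarly, in the reverse direction the existence and uniqueness of $z$ in axiom (\ref{theweirdone}) requires locating the configuration inside a common apartment and using convexity, which depends on the two-apartment axiom you are assuming --- fine as a proof, but the hypothesis $\dim(u_i)+\dim(v_i)<n$ must be seen to be exactly the condition that the relevant gallery is minimal, and that identification is not automatic. As an outline of the standard proof your proposal is sound; as a proof it is missing the theorem's actual content.
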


This result motivates the study of triangle presentations, since they provide combinatorial descriptions of the rich geometric structure of affine buildings. Below we include some examples.

\begin{ex}\label{numbertheoryex} \textbf{Cyclic planar difference sets}. We present a generalization of the examples \cite[pp.156-157]{MR1232965}. These may be known to experts, but we could not find them in the literature described with this generality. A \textit{cyclic planar difference set} consists of a natural number $N=q^{2}+q+1$ and a subset $D\subseteq \mathbbm{Z}/N\mathbbm{Z}$ of size $q+1$ such that every non-zero element $m\in  \mathbbm{Z}/N\mathbbm{Z}$ may be written $m=d_{1}-d_{2}$ for a unique pair $(d_{1},d_{2})\in D\times D$. Given a cyclic planar difference set, we can define the structure of an abstract projective plane whose points are the elements of $\mathbbm{Z}/N\mathbbm{Z}$, and the lines are the translates of $D$, with a point incident to a line when it is an element of the set. Cyclic planar difference sets were first studied in \cite{MR1501951}, where they were shown to always exist if $q=p^{n}$ for some prime $p$. General difference sets are of interest in combinatorial design theory.

Given a cyclic planar difference set $D$, any translate $D+s$ is also a cyclic planar difference set. Furthermore, translation by $s$ induces an isomorphism of the associated projective geometries. A \textit{numerical multiplier} of a cyclic planar difference set is an integer $t$ such that $tD=D+s$ for some $s$. If $q=p^{n}$, then $p$ is a numerical multiplier for any cyclic planar difference set \cite[Theorem 4.5]{MR23536}. However, it is desirable for us to find $D$ that are \textit{invariant} under multiplication by $p$, i.e. the associated $s$ is $0$. We call a cyclic planar difference set \textit{standard} if $q=p^{n}$ and $D$ is invariant under multiplication by $p$.

For any cyclic planar difference set $D$ with $q$ a prime power, we claim there is a canonical translate $D_{0}=D+s_{0}$ which is standard. First we claim there is a unique translate $D_{0}=D+s_{0}$ such that $\sum_{d_{i}\in D_{0}} d_{i}=0$. To see this, set $c=\sum_{d_{i}\in D} d_{i}$. Then for any $s$, the sum of the elements in $D+s$ is $c+(q+1)s$. Since $q+1$ is relatively prime to $N=q^{2}+q+1$, it is invertible hence $$s_{0}=-(q+1)^{-1}c\ \text{mod}\ N.$$ Furthermore, multiplication by $p$ preserves the property have having $0$ sum, hence this translate is standard.

Now, given a standard cyclic planar difference set $D$ on $\mathbbm{Z}/N\mathbbm{Z}$ with $q=p^{n}$, we will define an $n=3$ triangle presentation of order $q$. Consider the induced projective geometry with point line correspondence $\sigma$

\begin{align*}
\Pi_{1}&:=\mathbbm{Z}/N\mathbbm{Z}\\
\Pi_{2}&:=\{m+D\}_{m\in \mathbbm{Z}/N\mathbbm{Z}}\\
\sigma(m)&:=m+D,\ \text{for}\ m\in \Pi_{1}\\
\end{align*}

That $\sigma$ is a bijection follows from the definition of planar difference set. Now, define

$$\mathcal{T}^{\prime}:=\{\ (m,\ m+d,\ m+(q+1)d\ )\in \Pi_{1}\times \Pi_{1}\times \Pi_{1}\ : m\in \mathbbm{Z}/N\mathbbm{Z},\ d\in D\}$$

$$\mathcal{T}^{\prime \prime}:=\{\ (\sigma(m_{3}),\ \sigma(m_{2}),\ \sigma(m_1) )\ :\ (m_1,\ m_{2},\ m_{3})\in \mathcal{T}^{\prime}\}$$

$$\mathcal{T}:=\mathcal{T}^{\prime}\cup \mathcal{T}^{\prime \prime}$$

\begin{prop} $\mathcal{T}$ as defined above is a triangle presentation.
\end{prop}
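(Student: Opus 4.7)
The plan is to verify each of the six axioms of Definition \ref{tridefn} for $n=3$, exploiting the symmetric structure $\mathcal{T} = \mathcal{T}' \cup \mathcal{T}''$ in which every triple has all three entries of the same algebraic dimension.

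Several of the axioms are essentially formal. Axiom (\ref{mod}) holds because triples in $\mathcal{T}'$ lie in $\Pi_1^3$ and those in $\mathcal{T}''$ lie in $\Pi_2^3$, giving dimension sums $3$ and $6$, both congruent to $0 \pmod 3$. Axiom (\ref{lambdareflection}) is immediate from the defining formula for $\mathcal{T}''$ together with $\sigma^2 = \id$. Axiom (\ref{theweirdone}) is vacuous for $n = 3$: its dimension hypothesis $\dim(u_i) + \dim(v_i) < 3$ forces $u_i, v_i \in \Pi_1$, whence $(u_1, v_1, w) \in \mathcal{T}$ must lie in $\mathcal{T}'$, giving $w \in \Pi_1$ and $\sigma(w) \in \Pi_2$; but no element of $\mathcal{T}$ has first two entries in $\Pi_1$ and third entry in $\Pi_2$. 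Uniqueness (\ref{uniqueness}) is immediate for $\mathcal{T}'$ since fixing $u = m$ and $v = m+d$ forces $d \in D$ and hence $w = m + (q+1)d$; uniqueness for $\mathcal{T}''$ follows by reflection, while $\mathcal{T}'$ and $\mathcal{T}''$ occupy disjoint regions of $\Pi^3$ so there is no cross-contamination.

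The substantive content lies in cyclic invariance (\ref{cyclicinvariance}) and the incidence axiom (\ref{incidence}). For (\ref{cyclicinvariance}) applied to $\mathcal{T}'$, a direct calculation shows that the cyclic shift of $(m, m+d, m+(q+1)d)$ equals $(m', m' + qd, m' + (q+1)(qd))$ with $m' = m + d$, where the third coordinate uses the arithmetic identity $1 + q(q+1) = 1 + q + q^2 = N \equiv 0 \pmod N$. For this to lie in $\mathcal{T}'$ one needs $qd \in D$, and this is precisely where standardness enters: $pD = D$ together with $q = p^n$ yields $qD = D$ by iteration. Cyclic invariance of $\mathcal{T}''$ then follows from that of $\mathcal{T}'$ via the defining reflection. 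For (\ref{incidence}), a case analysis on $(\dim u, \dim v)$ suffices: when $u, v \in \Pi_1$, incidence of $\sigma(u) = u + D$ with $v$ is equivalent to $v - u \in D$, which is exactly the condition for the existence of $(u, v, w) \in \mathcal{T}'$; the $\Pi_2 \times \Pi_2$ case reduces to this via $\sigma$ together with $qD = D$; and the mixed-dimension cases are simultaneously vacuous on both sides of the iff, since two distinct points (respectively, two distinct lines) are never incident in a projective plane. The main obstacle is therefore the cyclic invariance calculation, where the arithmetic identity $1 + q + q^2 = N$ and the $q$-invariance of $D$ from standardness together produce closure under cyclic permutations; every other axiom either reduces to this core check via reflection or is formal.
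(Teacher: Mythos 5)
Your argument is correct and matches the paper's proof in all essentials: conditions (3) and (5) are formal, (6) is vacuous for $n=3$, incidence and uniqueness for $\mathcal{T}'$ are read off the definitions, and the real content is cyclic invariance of $\mathcal{T}'$ via the identity $1+q+q^{2}\equiv 0 \pmod{N}$ together with the standardness property $qD=D$. The one place you are slightly glib is uniqueness for $\mathcal{T}''$: reflection alone converts it into the claim that the \emph{first} entry of a $\mathcal{T}'$-triple is determined by its last two, which is not literally condition (4) for $\mathcal{T}'$ — you must additionally invoke the cyclic invariance you have already established (or, as the paper does, subtract the two defining congruences $m'+d'=m+d$ and $m'+(q+1)d'=m+(q+1)d$ and use that $q$ is invertible modulo $N$).
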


\begin{proof}
Note that in the $n=3$ setting, condition \ref{theweirdone} from Definition \ref{tridefn} is vacuously true, and conditions \ref{lambdareflection} and \ref{mod} follow directly from the definition of $\mathcal{T}$. To verify the remaining conditions, first consider triples in $\mathcal{T}^{\prime}$. Condition \ref{incidence} for this subset follows directly from the definitions. To prove \ref{cyclicinvariance}, consider $(m,\ m+d,\ m+(p+1)d\ ) \in \mathcal{T}^{\prime}$. Then to check $( m+d,\ m+(q+1)d,\ m )\in \mathcal{T}^{\prime}$, note $m+(q+1)d=(m+d)+qd$ and since $D$ is standard $qd\in \mathcal{D}$. This is of the correct form for an entry following $m+d$ in $\mathcal{T}^{\prime}$. Furthermore, $(m+d)+(q+1)qd=m+(q^{2}+q+1)d=m\ \text{mod}\ N $, hence the third entry is also of the correct form. Condition \ref{uniqueness} from Definition \ref{tridefn} is obvious.

Now for triples in $\mathcal{T}^{\prime \prime}$, \ref{incidence} in Definition \ref{tridefn} follows from \ref{incidence} and \ref{cyclicinvariance} for $\mathcal{T}^{\prime}$ triples. Similarly \ref{cyclicinvariance} comes from $\ref{cyclicinvariance}$ for $\mathcal{T}^{\prime}$ triples. It remains to show \ref{uniqueness}. But this is equivalent to showing that if $(u,v,w), (u^{\prime}, v, w)\in \mathcal{T}^{\prime}$, then $u=u^{\prime}$. Suppose $m^{\prime}+d^{\prime}=m + d$ and $m^{\prime}+(q+1)d^{\prime}=m+(q+1){}d$. But since $q$ is invertible in the ring $\mathbbm{Z}/N\mathbbm{Z}$, subtracting the equations yields $d=d^{\prime}$, hence $m=m^{\prime}.$

\end{proof}

We include some examples:

\begin{itemize}
\item
Let $q=4=2^{2}$, so the group in question is $\mathbbm{Z}/21\mathbbm{Z}$. A cyclic planar difference set is given by $D=\{0,1,4, 14, 16\}$, this is not invariant under multiplication by $p=2$, but applying the procedure described above gives us the standard set $D_{0}=D+14=\{14,15,18,7,9\}$
{}
\item
Let $q=7=p$, so the group in question is $\mathbbm{Z}/57\mathbbm{Z}$. A cyclic planar difference set is given by $D=\{0,1,3,13,32,36,43,52\}$. The associated standard set is given by $D_{0}=D+6=\{6,7,9,19,38,42,49,1\}$ 
\end{itemize}

%%%%%%%%%%%%%%%%%%%%%%%%%%%%%%%%%%%%%%%%%%%%%%%%%%%%%

A general family of standard cyclic planar difference sets arises in the context of field extensions. Let $q=p^{n}$ and consider the finite field $\mathbbm{F}_{q^{3}}$ of order $q^{3}$. Since $\mathbbm{F}_{q}\subseteq \mathbbm{F}_{q^{3}}$, $\mathbbm{F}_{q^{3}}$ naturally carries the structure of a 3-dimensional vector space over $\mathbbm{F}_{q}$. The set $\Pi_{1}$ of lines (which are points in the associated projective space) can be identified with the group $\mathbbm{F}^{\times}_{q^{3}}/\mathbbm{F}^{\times}_{q}$, which is a cyclic group of order $q^{2}+q+1$. Furthermore the structure of the extension gives a $\mathbbm{F}_{q}$ valued trace on $\mathbbm{F}_{q^{3}}$. Note that while the trace of a single line is undefined, having $0$ trace is a well-defined notion for one dimensional subspaces. Thus we can define $D:=\{x\in \Pi_{1}\ :\ Tr(x)=0\}$. The cyclic group $\Pi_{1}$ together with $D$ is a standard cyclic planar difference set. 

The corresponding triangle presentations in this case were given in \cite[pp.156-157]{MR1232965}, where in addition it is shown the corresponding building is isomorphic to the Bruhat-Tits building of $\text{PGL}(3, \mathbbm{F}_{q}((t)))$. This class of examples can be generalized to arbitrary $n\ge 3$, showing in particular that there exists triangle presentations for all prime powers $q$ and all $n\ge 3$ associated with groups acting on the Bruhat-Tits building for $\text{PGL}(n, \mathbbm{F}_{q}((t)))$ \cite{MR1613560}.
\end{ex}

\begin{ex}\cite{MR1232966}\label{explicitex} Here we present an example that does not appear to lie in any known infinite family. This example is ``exotic'' in the sense that the corresponding building is not a Bruhat-Tits building. This example also illustrates the smallest parameters for which our construction works, namely $p=2$, $n=3$, $q=3$. A complete classification of triangle presentations with $n=q=3$ is given in \cite{MR1232966}. We present here their triangle presentation with label $15.1$.

Set $\Pi_{1}:=\{p_{i}\}^{12}_{i=0}$ and $\Pi_{2}:=\{l_{i}\}^{12}_{i=0}$. The involution $\sigma$ is given by 

$$\sigma(p_0)=l_{0},\ \sigma(p_1)=l_{3},\ \sigma(p_2)=l_{12},\ \sigma(p_3)=l_{1},\ \sigma(p_4)=l_{9},\ \sigma(p_5)=l_{10},\ \sigma(p_6)=l_{8},$$ 
$$\sigma(p_7)=l_{2},\ \sigma(p_8)=l_{11},\ \sigma(p_9)=l_{6},\ \sigma(p_{10})=l_{4},\ \sigma(p_{11})=l_{5},\ \sigma(p_{12})=l_{7}$$

We now list the triples of $\mathcal{T}$ conatined in $\Pi_{1}\times \Pi_{1}\times \Pi_{1}$. To obtain the triples in $\Pi_{2}\times \Pi_{2}\times \Pi_{2}$ we simple apply $\sigma$ and reverse the order to the list. Also we only include one representative for each cyclic permutation class:

$$(p_0, p_0, p_0),\ (p_{10}, p_{10}, p_{5}),\ (p_{11}, p_{11}, p_{5}),\ (p_{0}, p_{1}, p_{4}),\ (p_{0}, p_{4}, p_{2}),\ (p_{0}, p_{6}, p_{12}),$$

 $$(p_{1}, p_{3}, p_{5}),\ (p_{1}, p_{7}, p_{3}),\  (p_{1}, p_{9}, p_{6}),\ (p_{2}, p_{7}, p_{3}),\ (p_{2}, p_{5}, p_{3}),\ (p_{2}, p_{12}, p_{8}),$$

$$(p_{4}, p_{9}, p_{10}),\ (p_{4}, p_{10}, p_{8}),\ (p_{6}, p_{8}, p_{11}),\ (p_6,p_9,p_7),\ (p_{7}, p_{8}, p_{12}),\ (p_{9}, p_{12}, p_{11})$$

From the above data you can work out the lines as sets of points on the corresponding projective plane. For example, since $\sigma(p_{1})=l_{3}$, $l_{3}$ consists of the set of points immediately following $p_{1}$ in the list, i.e. $l_{3}=\{p_{4},p_{3},p_{7},p_{9}\}$

\end{ex}

\begin{ex}\label{charzero} \textbf{Bruhat-Tits buildings in characteristic $0$}. The previous examples of triangle presentations are either non-linear, or have the Bruhat-Tits building of the local field $\mathbbm{F}_{q}((t))$ as their underlying building. In a different direction \cite{MR2881229} classifies all discrete groups which act transitively on the vertices of a Bruhat-Tits building of dimension at least $4$ over a non-Archimedean local field of characteristic $0$. In particular, they classify all simply transitive actions. It turns out there are not too many, which starkly contrasts with the positive characteristic cases described above. To our knowledge, explicit triangle presentations for the examples in \cite{MR2881229} have not been written down.

\end{ex}

%%%%%%%%%%%%%%%%%%%%%%%%%%%%%%%%%%%%%%%%%%%%%%%%%%%%%%%%%%%%%%%%%%%%%%%%%%%%%%%%%%%%%%%%%%%%%%%%%%%%%%%%%%%%%

\begin{ex}\label{degenerate} As described above, $\tilde{\text{A}}_{n-1}$ triangle presentations are combinatorial structures defined over a finite projective geometry of (algebraic) dimension n and order $q$. When studying combinatorial structures over projective geometries of order $q$, it is natural to consider this structure in the degenerate case of $q=1$, which is often interpreted in the context of the field with one element $\mathbbm{F}_{1}$ as envisioned by Tits \cite{MR0108765}. A projective geometry over $\mathbbm{F}_{1}$ of algebraic dimension $n$ degenerates to the collection of subsets of a finite set of size $n$, with incidence relation coming from subset inclusion. In this way, combinatorial structures defined on finite projective geometries of order $q$ often degenerate to familiar combinatorial structures on finite sets, providing motivation and intuition for the general structure. In this spirit, we consider the following \textit{degenerate} triangle presentation.

Fix a natural number $n$ and set $X=\{0,1,2,3,\dots, n\}$. Let $\Pi_{k}$ denote the collection of subsets of $X$ of size $k$, and set $\Pi:=\bigcup^{n}_{k=1} \Pi_{k}$, the set of proper subsets. For $x\in \Pi$, we denote by $\text{dim}(x)$ the cardinality of $x$. We have an \textit{incidence relation} $x\sim y$ if either $x\subseteq y$ or $y\subseteq x$. We also have an involution $\sigma: \Pi\rightarrow \Pi$, $\sigma(\Pi_{k})=\Pi_{n+1-k}$ which takes a subset of $X$ to its complement.

Now consider the following subsets of $ \Pi\times \Pi\times \Pi$, 
$$\mathcal{T}^{\prime}=\{(x,y,z)\in \Pi\times \Pi\times \Pi\ :\ x,y,z\ \text{are pairwise disjoint, and}\ x\cup y\cup z=X\}$$

$$\mathcal{T}^{\prime \prime}=\{(x,y,z)\in \Pi\times \Pi\times \Pi\ :\ (\sigma(z),\sigma(y),\sigma(x))\in \mathcal{T}^{\prime} \}$$ 

$$\mathcal{T}=\mathcal{T}^{\prime}\cup \mathcal{T}^{\prime \prime} $$

Then it is easy to verify that $\mathcal{T}$ satisfies all the conditions of a triangle presentation, except that the incidence relation on $\Pi$ does not give a projective geometry (it gives instead a \textit{degenerate} projective geometry). We will see that nevertheless, the construction of our fiber functor from Theorem \ref{TriangleSLnThm} does apply to these degenerate triangle presentations as long as $\mathbbm{k}$ has characteristic $0$, as expected from the $\mathbbm{F}_{1}$ philosophy.

\end{ex}

\section{Categories and diagrammatics}\label{categories}

In this section, we review some categories that have descriptions in terms of diagrammatic generators and relations. This type of description is in the spirit of skein theory for planar algebras and knot polynomials \cite{math.QA/9909027}. For a general reference on linear monoidal categories, we refer the reader to \cite{MR3242743}.

\subsection{Polynomial $\text{GL}_{n}$ webs}

In this section we describe a linear monoidal category called the \textit{polynomial web category for} $\text{GL}_{n}$ over a field $\mathbbm{k}$ \cite{MR3263166}, \cite{MR4135417}. Our presentation closely follows \cite{MR4135417}. Indeed, in the description below, our presentation is easily derived as the quotient of $\text{Web}$ \cite[Definition 4.7]{MR4135417} by the monoidal ideal generated by the identities of objects of weight $m$, where $m>n$. If $\mathbbm{k}$ is algebraically closed, the monoidal category described below is equivalent to the monoidal category of \textit{polynomial representations} of $\text{Gl}_{n}$, which can be equivalently described as the full subcategory of tilting modules for $\text{GL}_{n}$ generated by wedge powers of the standard $n$-dimensional representation (see \cite[Remark 4.15]{MR4135417}).

Objects in our monoidal category will be sequences of numbers from the set $\{1, \cdots, n\}$. The monoidal product (which we sometimes denote $\otimes$) is concatenation of sequences, and the monoidal unit consists of the empty sequence. To describe morphisms, we introduce the generating morphisms 

$$
\begin{tikzpicture}[scale=0.7, baseline = -.1cm]
\draw (90:1) node[above] {a+b}  -- (0,0);
\draw (220:1) node[below] {a} -- (0,0);
\draw (320:1) node[below] {b}  -- (0,0);
\end{tikzpicture}\ \ \ \  \text{and}\ \ \ \ \ 
\begin{tikzpicture}[scale=0.7, baseline = -.1cm]
\draw (40:1) node[above] {b} -- (0,0);
\draw (140:1) node[above] {a} -- (0,0);
\draw (270:1)  node[below] {a+b} -- (0,0);
\end{tikzpicture}\ \ \ \ \text{and}\ \ \ \ 
\begin{tikzpicture}[scale=0.7, baseline = -.1cm]
\draw  (-0.7,-1) node[below] {a} -- (0.7,0.7) node[above] {a};
\draw  ( 0.7,-1)node[below] {b} -- (-0.7,0.7) node[above] {b};
\end{tikzpicture}
$$

\noindent which we call merges (from $(a,b)\rightarrow (a+b)$), splits (from $(a+b)\rightarrow (a,b)$), and crossings (from $(a,b)\rightarrow (b,a)$) respectively  (note we read diagrams bottom to top). The labels of all strings in generating morphisms must lie in the set $\{1,\dots, n\}$. For example if $a+b>n$, there is no allowed label $a+b$, hence no merge or split morphism of that type, though crossings will exist. Morphisms will then be formal $\mathbbm{k}$-linear combinations of vertical and horizontal compositions of these generators, subject to the following linear relations (cf. \cite[Definition 4.7]{MR4135417}):

\begin{equation}\label{associativity}
\begin{tikzpicture}[scale=0.7, baseline = 0 cm]
\draw (-1,1) node[above]{a} -- (-0.5,0.5);
\draw (-0.5,0.5) --  (0,0);
\draw (0,1)node[above]{b}--(-0.5,0.5);
\draw (1,1)node[above]{c}--(0,0);
\draw (0,0)--(0,-0.5)node[below]{a+b+c};
\end{tikzpicture}\ \ \ =\ \ \ 
\begin{tikzpicture}[scale=0.7, baseline = 0cm]
\draw (-1,1) node[above]{a} -- (0,0) ;
\draw (0.5,0.5) -- (0,0);
\draw (0,1)node[above]{b}--(0.5,0.5);
\draw (1,1)node[above]{c}--(0.5,0.5);
\draw (0,0)--(0,-0.5)node[below]{a+b+c};
\end{tikzpicture}\ \ \ \  ,\ \ \ \ \ \ \ \ \ 
\begin{tikzpicture}[scale=0.7, baseline = -.3 cm]
\draw (-1,-1) node[below]{a} -- (-0.5,-0.5);
\draw (-0.5,-0.5) --  (0,0);
\draw (0,-1)node[below]{b}--(-0.5,-0.5);
\draw (1,-1)node[below]{c}--(0,0);
\draw (0,0)--(0,0.5)node[above]{a+b+c};
\end{tikzpicture}\ \ \ =\ \ \ 
\begin{tikzpicture}[scale=0.7, baseline = -.3cm]
\draw (-1,-1) node[below]{a} -- (0,0) ;
\draw (0.5,-0.5) -- (0,0);
\draw (0,-1)node[below]{b}--(0.5,-0.5);
\draw (1,-1)node[below]{c}--(0.5,-0.5);
\draw (0,0)--(0,0.5)node[above]{a+b+c};
\end{tikzpicture}
\end{equation}

\begin{equation}\label{burstingbigons}
\begin{tikzpicture}[scale=0.7, baseline = -.1cm]
\draw (0,-1)node[below]{a+b} -- (0,-0.5) ;
\draw (270:0.5) arc (270:90:0.5);
\draw (-90:0.5) arc (-90:90:0.5);
\draw (0,0.5)--(0,1) node[above]{a+b};
\node(none) at (1,0) {a};
\node(none) at (-1,0) {b};
\end{tikzpicture}\ \ \ = \ \ \ \  \left({a+b \atop a}\right)\ \  \begin{tikzpicture}[scale=1, baseline = -.1cm]
\draw (0,-1) -- (0,1) ;
\end{tikzpicture}
\end{equation}

\begin{equation}\label{bialgebra}
\begin{tikzpicture}[scale=0.7, baseline = -.1cm]
\draw (-0.5,-1) node[below]{a} -- (0,-0.5)  ;
\draw (0.5,-1) node[below]{c} -- (0,-0.5) ;
\draw (0,-0.5) -- (0,0.5) ;
\draw (0,0.5) -- (-0.5,1) node[above]{b} ;
\draw (0,0.5) -- (0.5,1) node[above]{d} ;
\end{tikzpicture}\ =\ 
\sum_{s} \ \ \ \begin{tikzpicture}[scale=0.7, baseline = -.1cm]
\draw (-0.5,-1) node[below]{a} -- (-0.5,1) node[above]{b} ;
\draw (0.5,-1) node[below]{c} -- (0.5,1) node[above]{d}  ;
\draw (-0.5,-0.5) -- (0.5,0.5) ;
\draw (0.5,-0.5) -- (-0.5,0.5) ;
\node(none) at (-0.8,0) {s};
\end{tikzpicture}
\end{equation}

We sometimes do not label strings, but in this case unlabeled strings can be deduced uniquely from the given labels. Also summations aways occur over all allowable values of the corresponding label (we allow the ``empty string'' as a string with label $0$ in summations). We call the relations in equations \ref{associativity} \textit{coassociativity} and \textit{associativity} respectively. We call the relations in equation \ref{burstingbigons} the \textit{bigon bursting} relation, and the relation from equation \ref{bialgebra} the \textit{bialgebra} relations. Indeed, it follows from the relations that the crossing generator makes $\text{PolyWeb}(\text{GL}_{n})$ into a symmetric monoidal category. Setting $A=\bigoplus^{n}_{a=0} a$ in the additive envelope, then the trivalent vertices give $A$ the structure of a coalgebra and an algebra. Relation \ref{bialgebra} is precisely the statement that this is a bialgebra internal to $\text{PolyWeb}(\text{GL}_{n})$.

There are many additional relations that follow as consequences of the above. In particular, the crossing generator is actually redundant. Indeed, a consequence of the above relations is that the braid can be written in terms of the trivalent vertices \cite[Equation 4.36]{MR4135417}

\begin{equation}\label{braiddefine}
\begin{tikzpicture}[scale=0.7, baseline = -.1cm]
\draw  (-0.5,-1)  -- (0.5,1); 
\draw  ( 0.5,-1) -- (-0.5,1);
\node(none) at (-0.9,-1.1) {a};
\node(none) at (0.9,-1.1) {b};
\node(none) at (0.9,1.1) {a};
\node(none) at (-0.9,1.1) {b};
\end{tikzpicture}= \sum_{t} \ \ \ (-1)^{t}\ \ \  \begin{tikzpicture}[scale=0.7, baseline = -.1cm]
\draw (-0.5,-1)  -- (-0.5,-0.5) ;
\draw (-0.5,-0.5) -- (0.5,-0.2) ;
\draw (-0.5,-0.5) -- (-0.5,0.5);
\draw (-0.5,0.5) -- (-0.5,1) (-0.5,1) ;
\draw (0.5,0.2) -- (-0.5,0.5);
\draw (0.5,-1)  -- (0.5,-0.2);
\draw (0.5,-0.2)--(0.5,0.5);
\draw (0.5,0.5)--(0.5,1) (0.5,1) ;
\node(none) at (-0.7,0) {t};
\node(none) at (-0.9,-1.1) {a};
\node(none) at (0.9,-1.1) {b};
\node(none) at (0.9,1.1) {a};
\node(none) at (-0.9,1.1) {b};
\end{tikzpicture}
\end{equation}

We can use this to given a presentation for the category $\text{PolyWeb}(\text{GL}_{n})$ purely in terms relations between trivalent vertices. In particular, we have the following \textit{square-switch relations}:

\begin{equation}\label{squareswitch1}
 \begin{tikzpicture}[scale=0.9, baseline = -.1cm]
\draw (-0.5,-1) node[below] {a} -- (-0.5,-0.5) ;
\draw (-0.5,-0.5) -- (0.5,-0.2) ;
\draw (-0.5,-0.5) -- (-0.5,0.5);
\draw (-0.5,0.5) -- (-0.5,1) (-0.5,1);
\draw (0.5,0.2) -- (-0.5,0.5);
\draw (0.5,-1) node[below] {b} -- (0.5,-0.2);
\draw (0.5,-0.2)--(0.5,0.5);
\draw (0.5,0.5)--(0.5,1) (0.5,1) ;
\node(none) at (0,0.7) {c};
\node(none) at (0,-0.7) {d};
\end{tikzpicture}=\ \ \sum_{t}\ \ {a-b+c-d\choose t}\ \ \begin{tikzpicture}[scale=0.9, baseline = -.1cm]
\draw (-0.5,-1) node[below]{a}--(-0.5,1);
\draw (0.5,-1) node[below]{b}--(0.5,1);
\draw (0.5,-0.5)--(-0.5,-0.2);
\draw (-0.5,0.2)--(0.5,0.5);
\node(none) at (0,0.7) {d-t};
\node(none) at (0,-0.7) {c-t};
\end{tikzpicture}
\end{equation}

\bigskip

\begin{equation}\label{squareswitch2}
\begin{tikzpicture}[scale=0.9, baseline = -.1cm]
\draw (-0.5,-1) node[below]{a}--(-0.5,1);
\draw (0.5,-1) node[below]{b}--(0.5,1);
\draw (0.5,-0.5)--(-0.5,-0.2);
\draw (-0.5,0.2)--(0.5,0.5);
\node(none) at (0,0.7) {d};
\node(none) at (0,-0.7) {c};
\end{tikzpicture}=\ \ \sum_{t}\ \ {b-a+d-c\choose t}\ \ \begin{tikzpicture}[scale=0.9, baseline = -.1cm]
\draw (-0.5,-1) node[below] {a} -- (-0.5,-0.5) ;
\draw (-0.5,-0.5) -- (0.5,-0.2) ;
\draw (-0.5,-0.5) -- (-0.5,0.5);
\draw (-0.5,0.5) -- (-0.5,1) (-0.5,1);
\draw (0.5,0.2) -- (-0.5,0.5);
\draw (0.5,-1) node[below] {b} -- (0.5,-0.2);
\draw (0.5,-0.2)--(0.5,0.5);
\draw (0.5,0.5)--(0.5,1) (0.5,1) ;
\node(none) at (0,0.7) {c-t};
\node(none) at (0,-0.7) {d-t};
\end{tikzpicture}
\end{equation}

\begin{prop}\cite[Appendix A]{MR4135417} Consider a monoidal category generated by only the trivalent merges and splits, with crossing defined as in equation \ref{braiddefine} satisfying relations \ref{associativity} and \ref{burstingbigons}. Then relation \ref{bialgebra} is satisfied if and only if both square switch relations \ref{squareswitch1} and \ref{squareswitch2} are satisfied.
\end{prop}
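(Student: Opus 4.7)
The plan is to convert every expression involving a crossing into a purely trivalent one by substituting equation \ref{braiddefine}, and to recognize the resulting trivalent identities as the square switches (or, going the other way, to produce the bialgebra relation by reversing this substitution). Throughout, the diagrammatic manipulations reduce to the available relations \ref{associativity} (coassociativity/associativity) and \ref{burstingbigons} (bigon bursting), while the content of the argument is absorbed into binomial coefficient identities of Vandermonde type.

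For the implication that \ref{bialgebra} implies the two square switches: I would substitute \ref{braiddefine} into the crossing on the RHS of \ref{bialgebra}, so that both sides of the resulting equation are built only from merges and splits. Using \ref{associativity} to absorb the four corner trivalent vertices of the crossed diagram into the adjacent splits and merges, the RHS collapses to an H-shape ladder of the same form as the RHS of \ref{squareswitch2}, while the LHS is unchanged as the pure H-shape. After a straightforward re-indexing of the sum over the internal edge label, this is exactly \ref{squareswitch2}. The other square switch \ref{squareswitch1} then follows either by running the same argument after rotating/reflecting the diagram (this corresponds to an alternative placement of the crossing relative to the merge/split pair), or by observing that the matrices of binomials appearing in \ref{squareswitch1} and \ref{squareswitch2} are mutually inverse once \ref{squareswitch2} is known.

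For the converse, I would start with the H-shape on the LHS of \ref{bialgebra} and rewrite it by inserting a trivial bigon (using \ref{burstingbigons} in reverse) on the horizontal middle strand; this produces a ladder diagram to which \ref{squareswitch1} can be applied. The output is a crossed-ladder diagram whose internal crossing is of the form appearing on the RHS of \ref{braiddefine}, so recognizing this crossing converts the whole expression into the RHS of \ref{bialgebra}. The interchange of the order of summation and a Vandermonde-type identity for the signed binomial coefficients arising from \ref{braiddefine} and \ref{squareswitch1} is what makes the double sum collapse to the single sum over $s$ in \ref{bialgebra}.

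The main obstacle is the binomial coefficient bookkeeping. Each substitution of \ref{braiddefine} introduces a signed binomial $(-1)^{t}\binom{\cdot}{t}$ and each application of a square switch or of \ref{burstingbigons} introduces further binomials, so one ends up with nested sums of products of binomials whose labels are intricate linear combinations of the strand labels $a,b,c,d$ and the summation indices. The diagrammatic simplifications are routine, but showing that these nested sums reduce to the expected single-index expressions requires careful application of the standard Vandermonde identity $\sum_{t}\binom{m}{t}\binom{N}{k-t}=\binom{N+m}{k}$ (and its signed variant), and the bulk of the technical work lies in tracking labels accurately enough to recognize these identities in each case.
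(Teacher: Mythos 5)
The paper itself gives no proof of this proposition --- it is imported wholesale from \cite[Appendix A]{MR4135417} --- so your attempt can only be measured against the known argument, whose broad strategy (eliminate the crossing via \ref{braiddefine} and reduce everything to Vandermonde-type binomial identities among ladder diagrams) you have correctly identified. However, two of your concrete steps do not work as stated. The left-hand side of \ref{bialgebra} is a merge followed by a split (a ``bowtie''), not the two-rung ladder appearing in \ref{squareswitch1} and \ref{squareswitch2}. After you expand the crossing and absorb the corner vertices, what \ref{bialgebra} yields is the identity [bowtie] $=$ [double sum of two-rung ladders], and the bowtie is exactly the \emph{degenerate} square switch in which the first rung transfers the entire strand (e.g.\ $d=a$ in \ref{squareswitch1}, so the intermediate label is $0$). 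No re-indexing converts this single family of identities into the general square switch with arbitrary rung labels $c,d$. The missing idea is that a general two-rung ladder contains a bowtie as a sub-diagram on the rung strands (use \ref{associativity} to isolate ``merge $d$ into $b$, then split $c$ back off''), so that \ref{bialgebra} can be applied locally inside the ladder; only after that does the Vandermonde collapse produce \ref{squareswitch1}. (Your fallback of deducing \ref{squareswitch1} from \ref{squareswitch2} via mutually inverse binomial matrices is fine once one of the two is established in full generality, but it does not repair this step.)

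In the converse direction, ``inserting a trivial bigon using \ref{burstingbigons} in reverse'' means multiplying by $\binom{a+c}{k}^{-1}$, and these binomial coefficients routinely vanish over the positive-characteristic fields for which this paper actually needs the proposition; compare Proposition \ref{easytohardsquareswitch}, where exactly this kind of division forces the extra hypothesis $\mathrm{char}(\mathbbm{k})>c,d$, a hypothesis the present proposition does not carry. The correct move requires no division at all: the bowtie already \emph{is} a degenerate ladder (full-transfer lower rung), so a single application of \ref{squareswitch1} to it, followed by the signed Vandermonde identity to match the resulting sum against the crossing-expanded right-hand side of \ref{bialgebra}, completes the converse. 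As written, your argument would at best be valid after inverting many integers, which defeats the purpose of the statement in this paper.
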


\begin{rem}\label{standardfunctorpolyweb}  There is a standard monoidal functor $F:\text{PolyWeb}(\text{GL}_{n})\rightarrow \text{Vec}$, described in \cite[Theorem 4.14]{MR4135417}. Let $V=\mathbbm{k}^{n}$. Then the generating object $a\in \text{PolyWeb}(\text{GL}_{n})$ gets sent to $\bigwedge^{a} V$. The merging trivalent vertex $\begin{tikzpicture}[scale=0.4, baseline = -.1cm]
\draw (90:1) node[above] {a+b}  -- (0,0);
\draw (220:1) node[below] {a} -- (0,0);
\draw (320:1) node[below] {b}  -- (0,0);
\end{tikzpicture}$ maps to the canonical surjection $ (\bigwedge^{a} V )\otimes (\bigwedge^{b} V )\rightarrow \bigwedge^{a+b}V$ defined by  

$$v_{k_1}\wedge \dots \wedge v_{k_{a}}\otimes v_{j_1}\wedge \dots \wedge v_{j_b}\mapsto v_{k_1}\wedge \dots \wedge v_{k_{a}}\wedge v_{j_1}\wedge \dots \wedge v_{j_b}$$

\noindent while the split vertex $\begin{tikzpicture}[scale=0.4, baseline = -.1cm]
\draw (40:1) node[above] {b} -- (0,0);
\draw (140:1) node[above] {a} -- (0,0);
\draw (270:1)  node[below] {a+b} -- (0,0);
\end{tikzpicture}$ is assigned the linear map $\bigwedge^{a+b} V\rightarrow \bigwedge^{a}V \otimes \bigwedge^{b} V$ defined by $$v_{k_1}\wedge \dots\ v_{k_{a+b}}\mapsto \sum_{g\in (S_{a+b}/S_{a}\times S_{b})_{min}} (-1)^{l(g)} v_{k_{g(1)}} \wedge\ \dots \wedge v_{k_{g(a)}}\otimes v_{k_{g(a+1)}} \wedge\ \dots \wedge v_{k_{g(a+b)}}  $$

\noindent where $(S_{a+b}/S_{a}\times S_{b})_{min}$ denotes a choice of minimal length representatives of left cosets $S_{a+b}/S_{a}\times S_{b}$ and $l(g)$ denotes the length of an element (with respect to the usual Coxeter presentation of the symmetric groups). Under these identifications, the $(a,b)\rightarrow (b,a)$ crossing is assigned to the isomorphism $ (\bigwedge^{a} V )\otimes (\bigwedge^{b} V )\rightarrow (\bigwedge^{b} V )\otimes (\bigwedge^{a} V )$ given by $$v\otimes w\mapsto (-1)^{ab} w\otimes v.$$ Notice this is \textit{not} the ordinary braiding on $\text{Rep}(\text{GL}_{n}(\mathbbm{k}))$ which simply permutes tensor factors, but rather is the symmetric braiding associated to the involutive central element $ -\text{Id}_{n}\in Z(\text{GL}_{n}(\mathbbm{k}))$.
\end{rem}

The following proposition is a combination of a remark in \cite[pp. 8]{MR3263166} and \cite[Proposition 1]{1808.10575}, but we include a proof here since we are in a positive characteristic setting.

\begin{prop}\label{easytohardsquareswitch} For $1\le a,b,\le n$, if $\text{char}(\mathbbm{k})>c,d$, then relations \ref{squareswitch1} and \ref{squareswitch2} follow from \ref{associativity},\ref{burstingbigons} and the special case of square switch relations
\end{prop}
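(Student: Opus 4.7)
The plan is to prove relations \ref{squareswitch1} and \ref{squareswitch2} by double induction on the middle labels $c$ and $d$, with the base case furnished by the assumed special case of the square-switch relations. The characteristic hypothesis $\text{char}(\mathbbm{k}) > c, d$ enters precisely to guarantee that the small binomial coefficients $\binom{c}{j}$ and $\binom{d}{j}$ (for $1 \le j \le \max(c,d)$) that arise from bigon bursting in the inductive step are invertible in $\mathbbm{k}$, so that the induction can propagate.

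The key inductive move is as follows. Fix a square-switch configuration of the form appearing in \ref{squareswitch1}, with middle-edge labels $c$ and $d$, and assume without loss of generality $c \ge 2$. Using associativity \ref{associativity}, I would refactor the upper trivalent merge of outgoing label $c$ by inserting a split that separates off a strand of label $1$ from a strand of label $c-1$. This produces inside the diagram a new H-shaped subdiagram whose upper middle edge has label $c-1$, to which the inductive hypothesis applies. After substituting the inductive expression, I would re-absorb the auxiliary label-$1$ strand by applying associativity once more to collapse the resulting bigon and then invoking bigon bursting \ref{burstingbigons}, which produces a nonzero scalar factor of the form $\binom{c}{1}$ (or similar small binomial). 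By the characteristic hypothesis this scalar is invertible, so one may solve for the original square-switch expression. A symmetric induction on $d$ applied to the lower trivalent vertex then reduces everything to the base case. Relation \ref{squareswitch2} follows either from a parallel induction or formally from \ref{squareswitch1} by a vertical-flip argument using coassociativity.

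The main obstacle is the combinatorial bookkeeping on binomial coefficients. Each inductive step yields a sum of H-shaped diagrams whose coefficients are products of binomial coefficients from bigon bursting together with coefficients of the form $\binom{(a-?)-b+(c-1)-d}{t}$ inherited from the inductive hypothesis. To match the desired closed form $\binom{a-b+c-d}{t}$ on the right-hand side of the general relation, one must invoke a Vandermonde-type identity on these binomial coefficients. This identity is routine in characteristic zero but requires the invertibility of the leading bigon-bursting factors in positive characteristic, which is exactly what the hypothesis $\text{char}(\mathbbm{k}) > c, d$ provides. Once the combinatorial identity is verified, assembling the induction for both square-switch relations is straightforward.
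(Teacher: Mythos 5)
Your overall strategy---induction on the middle labels $c$ and $d$, splitting off a label-$1$ strand with associativity, applying the inductive hypothesis to the resulting smaller H-shaped subdiagram, and re-absorbing the auxiliary strand via bigon bursting at the cost of an invertible scalar of the form $\binom{s+1}{1}=s+1$---is exactly the engine of the paper's proof, and your identification of where the hypothesis $\text{char}(\mathbbm{k})>c,d$ enters is correct. The paper runs this induction first in $d$ with $c=1$ fixed and then in $c$, and handles \ref{squareswitch2} by a mirror-image argument, much as you propose.

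However, there is a genuine gap at the base of your induction. The assumed special cases \ref{squareswitcha11} and \ref{squareswitch1aa} are only the instances of the square-switch relation with $c=d=1$ \emph{and one of the two boundary labels equal to $1$} (boundary labels $(a,1)$ and $(1,a)$ respectively). Your induction on the middle labels keeps the boundary labels $a,b$ fixed, so its base case is the $c=d=1$ relation for \emph{arbitrary} $a,b$---and that is not among the assumed relations, nor does your inductive step produce it (splitting a label-$1$ edge into a label-$1$ and a label-$0$ edge makes no progress). The paper supplies precisely this missing base case by a separate argument: it considers the diagram in which the $a$-strand and the $b$-strand each emit a label-$1$ strand, the two label-$1$ strands merge into a label-$2$ edge and then split apart again, and it reduces this single diagram in two different ways using \ref{squareswitcha11} and \ref{squareswitch1aa}; equating the two reductions yields the $c=d=1$ square switch for general $(a,b)$, with the coefficient $(a-b)$ emerging as $(a-1)-(b-1)$. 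You need this step, or some substitute for it, before your induction can start.
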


\begin{equation}\label{squareswitcha11}\begin{tikzpicture}[scale=0.7, baseline = -.1cm]
\draw (-0.5,-1.2) -- (-0.5,-0.6) ;
\draw (-0.5,-0.6) -- (0.5,-0.4) ;
\draw (-0.5,-0.6) -- (-0.5,0.3);
\draw (-0.5,0.3) -- (-0.5,1.2);
\draw (0.5,0.4) -- (-0.5,0.6);
\draw (0.5,-1.2)--(0.5,-0.2);
\draw (0.5,-0.2)--(0.5,0.6);
\draw (0.5,0.6)--(0.5,1.2);
\node(none) at (-1,0) {a-1};
\node(none) at (1,0) {2};
\node(none) at (-0.9,1.2) {a};
\node(none) at (0.9,1.2) {1};
\node(none) at (-0.9,-1.2) {a};
\node(none) at (0.8,-1.2) {1};
\end{tikzpicture}\ =\ 
\begin{tikzpicture}[scale=0.7, baseline = -.1cm]
\draw (-0.5,-1.2) -- (0,-0.6) ;
\draw (0,-0.6) -- (0,0.6) ;
\draw (0,0.6) -- (-0.5,1.2);
\draw (0,0.6) -- (0.5,1.2);
\draw (0.5,-1.2)--(0,-0.6);
\node(none) at (0.7,0) {a+1};
\node(none) at (-0.9,1.2) {a};
\node(none) at (0.9,1.2) {1};
\node(none) at (-0.9,-1.2) {a};
\node(none) at (0.8,-1.2) {1};
\end{tikzpicture}\ +\ 
(a-1)\ \begin{tikzpicture}[scale=0.7, baseline = -.1cm]
\draw (-0.5,-1.2) -- (-0.5,1.2) ;
\draw (0.5,-1.2) -- (0.5, 1.2) ;
\node(none) at (-0.9,-1.2) {a};
\node(none) at (0.8,-1.2) {1};
\end{tikzpicture}
\end{equation}

\begin{equation}\label{squareswitch1aa}
\begin{tikzpicture}[scale=0.7, baseline = -.1cm]
\draw (-0.5,-1.2) -- (-0.5,-0.6) ;
\draw (0.5,-0.6) -- (-0.5,-0.4) ;
\draw (-0.5,-0.6) -- (-0.5,0.3);
\draw (-0.5,0.3) -- (-0.5,1.2);
\draw (-0.5,0.4) -- (0.5,0.6);
\draw (0.5,-1.2)--(0.5,-0.2);
\draw (0.5,-0.2)--(0.5,0.6);
\draw (0.5,0.6)--(0.5,1.2);
\node(none) at (-1,0) {2};
\node(none) at (1,0) {a-1};
\node(none) at (-0.9,1.2) {1};
\node(none) at (0.9,1.2) {a};
\node(none) at (-0.9,-1.2) {1};
\node(none) at (0.8,-1.2) {a};
\end{tikzpicture}\ =
\begin{tikzpicture}[scale=0.7, baseline = -.1cm]
\draw (-0.5,-1.2) -- (0,-0.6) ;
\draw (0,-0.6) -- (0,0.6) ;
\draw (0,0.6) -- (-0.5,1.2);
\draw (0,0.6) -- (0.5,1.2);
\draw (0.5,-1.2)--(0,-0.6);
\node(none) at (0.7,0) {a+1};
\node(none) at (-0.9,1.2) {1};
\node(none) at (0.9,1.2) {a};
\node(none) at (-0.9,-1.2) {1};
\node(none) at (0.8,-1.2) {a};
\end{tikzpicture}\ +\ 
(a-1)\ \begin{tikzpicture}[scale=0.7, baseline = -.1cm]
\draw (-0.5,-1.2) -- (-0.5,1.2) ;
\draw (0.5,-1.2) -- (0.5, 1.2) ;
\node(none) at (-0.9,-1.2) {1};
\node(none) at (0.8,-1.2) {a};
\end{tikzpicture}
\end{equation}

\begin{proof}

We can use \ref{squareswitcha11} and \ref{squareswitch1aa} to reduce the following diagram in two different ways $$\begin{tikzpicture}[scale=0.7, baseline = -.1cm]
\draw (-1,-1.2) -- (-1,1.2);
\draw (-1,-0.6) -- (0,-0.4);
\draw (1,-0.6) -- (0,-0.4);
\draw (0,-0.4) -- (0,0.4);
\draw (0,0.4)-- (-1,0.6);
\draw (0,0.4)-- (1,0.6);
\draw(1,-1.2)--(1,1.2);
\node(none) at (-1.5,-1.2) {a};
\node(none) at (1.5,-1.2) {b};
\node(none) at (-1.5,1.2) {a};
\node(none) at (1.5,1.2) {b};
\node(none) at (-0.3,0) {2};
\node(none) at (-0.5,-0.9) {1};
\node(none) at (0.5,-0.9) {1};
\node(none) at (-0.5,0.9) {1};
\node(none) at (0.5,0.9) {1};
\node(none) at (-1.7,0) {a-1};
\node(none) at (1.7,0) {b-1};
\end{tikzpicture}$$

Equating both sides gives us the special case of the square switch relation

\begin{equation}
\begin{tikzpicture}[scale=0.7, baseline = -.1cm]
\draw (-0.5,-1.2) -- (-0.5,-0.6) ;
\draw (-0.5,-0.6) -- (0.5,-0.4) ;
\draw (-0.5,-0.6) -- (-0.5,0.3);
\draw (-0.5,0.3) -- (-0.5,1.2);
\draw (0.5,0.4) -- (-0.5,0.6);
\draw (0.5,-1.2)--(0.5,-0.2);
\draw (0.5,-0.2)--(0.5,0.6);
\draw (0.5,0.6)--(0.5,1.2);
\node(none) at (-1.1,0) {a-1};
\node(none) at (1.1,0) {b+1};
\node(none) at (-0.9,1.2) {a};
\node(none) at (0.9,1.2) {b};
\node(none) at (-0.9,-1.2) {a};
\node(none) at (0.8,-1.2) {b};
\node(none) at (0,-0.9) {1};
\node(none) at (0,0.9) {1};
\end{tikzpicture}\ =\ 
\begin{tikzpicture}[scale=0.7, baseline = -.1cm]
\draw (-0.5,-1.2) -- (-0.5,-0.6) ;
\draw (0.5,-0.6) -- (-0.5,-0.4) ;
\draw (-0.5,-0.6) -- (-0.5,0.3);
\draw (-0.5,0.3) -- (-0.5,1.2);
\draw (-0.5,0.4) -- (0.5,0.6);
\draw (0.5,-1.2)--(0.5,-0.2);
\draw (0.5,-0.2)--(0.5,0.6);
\draw (0.5,0.6)--(0.5,1.2);
\node(none) at (-1.1,0) {a+1};
\node(none) at (1,0) {b-1};
\node(none) at (-0.9,1.2) {a};
\node(none) at (0.9,1.2) {b};
\node(none) at (-0.9,-1.2) {a};
\node(none) at (0.8,-1.2) {b};
\node(none) at (0,-0.9) {1};
\node(none) at (0,0.9) {1};
\end{tikzpicture}\ +\ 
(a-b)\ \begin{tikzpicture}[scale=0.7, baseline = -.1cm]
\draw (-0.5,-1.2) -- (-0.5,1.2) ;
\draw (0.5,-1.2) -- (0.5, 1.2) ;
\node(none) at (-0.9,-1.2) {a};
\node(none) at (0.8,-1.2) {b};
\end{tikzpicture}
\end{equation}

Now suppose we have \ref{squareswitch1} with $c=1$, $d=s$ and $s+1<n$. Then using \ref{burstingbigons} and \ref{associativity}, we have 

\begin{equation}
\begin{tikzpicture}[scale=0.7, baseline = -.1cm]
\draw (-0.5,-1.2) -- (-0.5,1.2) ;
\draw (-0.5,-0.6) -- (0.5,-0.4) ;
\draw (-0.5,0.4) -- (0.5,0.6);
\draw (0.5,-1.2)--(0.5,1.2);
%\node(none) at (-1.4,1.2) {a-s-1};
%\node(none) at (1.5,1.2) {b+s+1};
\node(none) at (-0.9,-1.2) {a};
\node(none) at (0.8,-1.2) {b};
\node(none) at (0,-0.9) {1};
\node(none) at (0,0.9) {s};
\end{tikzpicture}\ =\ (s+1)\ \begin{tikzpicture}[scale=0.7, baseline = -.1cm]
\draw (-0.7,-1.2) -- (-0.7,1.2) ;
\draw (-0.7,-0.1) -- (0.7,0.1) ;
\draw (0.7,-1.2)--(0.7,1.2);
%\node(none) at (-1.6,1.2) {a-s-1};
%\node(none) at (1.7,1.2) {b+s+1};
\node(none) at (-1.1,-1.2) {a};
\node(none) at (1.1,-1.2) {b};
\node(none) at (0,0.4) {s+1};
\end{tikzpicture}
\end{equation}

Then applying \ref{squareswitch1} we have 
\begin{align*}
\begin{tikzpicture}[scale=0.7, baseline = -.1cm]
\draw (-0.7,-1.2) -- (-0.7,1.2) ;
\draw (-0.7,-0.6) -- (0.7,-0.4) ;
\draw (0.7,0.4) -- (-0.7,0.6) ;
\draw (0.7,-1.2)--(0.7,1.2);
\node(none) at (-1.1,-1.2) {a};
\node(none) at (1.1,-1.2) {b};
\node(none) at (0,-0.9) {s+1};
\node(none) at (0,0.9) {1};
\end{tikzpicture} &=\ \frac{1}{s+1} \begin{tikzpicture}[scale=0.7, baseline = -.1cm]
\draw (-0.7,-1.2) -- (-0.7,1.2) ;
\draw (-0.7,-0.8) -- (0.7,-0.6) ;
\draw (-0.7,-0.3) -- (0.7,-0.1) ;
\draw (0.7,0.4) -- (-0.7,0.6) ;
\draw (0.7,-1.2)--(0.7,1.2);
\node(none) at (-1.1,-1.2) {a};
\node(none) at (1.1,-1.2) {b};
\node(none) at (0,-1) {1};
\node(none) at (0,0.1) {s};
\node(none) at (0,0.9) {1};
\end{tikzpicture}\\
&=  \frac{1}{s+1} \left[\begin{tikzpicture}[scale=0.7, baseline = -.1cm]
\draw (-0.7,-1.2) -- (-0.7,1.2) ;
\draw (-0.7,-0.8) -- (0.7,-0.6) ;
\draw (0.7,-0.3) -- (-0.7,-0.1) ;
\draw (-0.7,0.4) -- (0.7,0.6) ;
\draw (0.7,-1.2)--(0.7,1.2);
\node(none) at (-1.1,-1.2) {a};
\node(none) at (1.1,-1.2) {b};
\node(none) at (0,-1) {1};
\node(none) at (0,0.1) {1};
\node(none) at (0,0.9) {s};
\end{tikzpicture}+(a-b-1-s)\begin{tikzpicture}[scale=0.7, baseline = -.1cm]
\draw (-0.7,-1.2) -- (-0.7,1.2) ;
\draw (-0.7,-0.6) -- (0.7,-0.4) ;
\draw (-0.7,0.2) -- (0.7,0.4) ;
\draw (0.7,-1.2)--(0.7,1.2);
\node(none) at (-1.1,-1.2) {a};
\node(none) at (1.1,-1.2) {b};
\node(none) at (0,-0.9) {1};
\node(none) at (0,0.8) {s-1};
\end{tikzpicture} \right]\\
&= \frac{1}{s+1}\begin{tikzpicture}[scale=0.7, baseline = -.1cm]
\draw (-0.7,-1.2) -- (-0.7,1.2) ;
\draw (0.7,-0.8) -- (-0.7,-0.6) ;
\draw (-0.7,-0.3) -- (0.7,-0.1) ;
\draw (-0.7,0.4) -- (0.7,0.6) ;
\draw (0.7,-1.2)--(0.7,1.2);
\node(none) at (-1.1,-1.2) {a};
\node(none) at (1.1,-1.2) {b};
\node(none) at (0,-1) {1};
\node(none) at (0,0.1) {1};
\node(none) at (0,0.9) {s};
\end{tikzpicture}+\left(\frac{a-b}{s+1}+\frac{s(a-b-1-s)}{s+1} \right) \begin{tikzpicture}[scale=0.7, baseline = -.1cm]
\draw (-0.7,-1.2) -- (-0.7,1.2) ;
\draw (-0.7,-0.1) -- (0.7,0.1) ;
\draw (0.7,-1.2)--(0.7,1.2);
%\node(none) at (-1.6,1.2) {a-s-1};
%\node(none) at (1.7,1.2) {b+s+1};
\node(none) at (-1.1,-1.2) {a};
\node(none) at (1.1,-1.2) {b};
\node(none) at (0,0.4) {s};
\end{tikzpicture}\\
&= \begin{tikzpicture}[scale=0.7, baseline = -.1cm]
\draw (-0.7,-1.2) -- (-0.7,1.2) ;
\draw (0.7,-0.6) -- (-0.7,-0.4) ;
\draw (-0.7,0.4) -- (0.7,0.6) ;
\draw (0.7,-1.2)--(0.7,1.2);
\node(none) at (-1.1,-1.2) {a};
\node(none) at (1.1,-1.2) {b};
\node(none) at (0,-0.9) {1};
\node(none) at (0,0.9) {s+1};
\end{tikzpicture}+ (a-b-s) \begin{tikzpicture}[scale=0.7, baseline = -.1cm]
\draw (-0.7,-1.2) -- (-0.7,1.2) ;
\draw (-0.7,-0.1) -- (0.7,0.1) ;
\draw (0.7,-1.2)--(0.7,1.2);
%\node(none) at (-1.6,1.2) {a-s-1};
%\node(none) at (1.7,1.2) {b+s+1};
\node(none) at (-1.1,-1.2) {a};
\node(none) at (1.1,-1.2) {b};
\node(none) at (0,0.4) {s};
\end{tikzpicture}
\end{align*}

\noindent By induction, this gives us the square switch relation from Equation \ref{squareswitch1} in the case for $c=1$, $d<n$ satisfying $d<\text{char}(\mathbbm{k})$. An analogous argument gives us \ref{squareswitch1} for arbitrary $c<n, c<\text{char}(\mathbbm{k})$. A similar argument with mirror images gives Equation \ref{squareswitch2}.

\end{proof}

\subsection{$\text{SL}_{n}$ quotients}\label{SLnQuotients}

Given a monoidal category $\mathcal{C}$, a \textit{quotient} is a dominant monoidal functor $\mathcal{C}\rightarrow \mathcal{D}$ to some other monoidal category $\mathcal{D}$. A quotient can thus be obtained by adding new morphisms to the category $\mathcal{C}$. The category of representations of $\text{SL}_{n}$ is closely related to the category of representations of $\text{GL}_{n}$. The obvious difference is that the determinant representation of $\text{GL}_{n}$ restricts to the trivial representation of $\text{SL}_{n}$. Therefore we call all categories we obtain from adding an isomorphism from the object $n$ to the empty object in $\text{PolyWeb}(\text{GL}_{n})$ \textit{$\text{SL}_{n}$ quotients}. 

To construct $\text{SL}_{n}$ quotients, we will consider the category $\text{PolyWeb}(\text{GL}_{n})$, and add two new generators and some relations. In particular, we add morphisms 

$$
\begin{tikzpicture}[scale=0.5, baseline = -.1cm]
\draw (0,-1) node[below]{n}--(0,0);
\mydotw{(0,0)};
\end{tikzpicture}\ \ \ \text{and}\ \ \ \ 
\begin{tikzpicture}[scale=0.5, baseline = -.1cm]
\draw (0,-1) --(0,0)node[above]{n};
\mydotw{(0,-1)};
\end{tikzpicture},
$$

satisfying

\begin{equation}\label{univalent}
\begin{tikzpicture}[scale=0.5, baseline = -.1cm]
\draw (0,-1) node[below]{n} --(0,1);
\end{tikzpicture}=\begin{tikzpicture}[scale=0.5, baseline = -.1cm] 
\draw (0,-1) node[below]{n}--(0,-0.3);
\draw (0,0.3)--(0,1);{}
\mydotw{(0,-0.3)};
\mydotw{(0,0.3)};
\end{tikzpicture}\ \ \ \ \ \ ,\ \ \ \ \ \ \ 
\begin{tikzpicture}[scale=0.5, baseline = -.1cm] 
\draw (0,-0.5)--(0,0.5);
\mydotw{(0,-0.5)};
\mydotw{(0,0.5)};
\end{tikzpicture}=\ \ \ \ \begin{tikzpicture}[scale=0.5, baseline=-.1cm]
\draw[dotted] (0,0) circle (0.8);
\end{tikzpicture}
\end{equation}

\noindent (the dotted circle simply represents the empty diagram, and is included to avoid confusion).

We also would like a compatibility with the crossing generator. There are two sets of relations we consider:

\begin{equation}\label{SL+}
\text{SL}^{+}_{n}\ \text{relations}:\ \ \ \ \ \ \ \ \ \ \begin{tikzpicture}[scale=0.5, baseline = -.1cm]
\draw  (-0.5,-1) node[below] {a} -- (0.5,1);
\draw  ( 0.5,-1)node[below] {n} -- (-0.5,0.5) ;
\mydotw{(-0.5,0.5)};
\end{tikzpicture}=\ \  (-1)^{an}\ \ \begin{tikzpicture}[scale=0.5, baseline = -.1cm]
\draw  (-0.5,-1) node[below] {a} -- (0.5,1);
\draw  ( 0.5,-1)node[below] {n} -- (0.5,-0.5) ;
\mydotw{(0.5,-0.5)};
\end{tikzpicture}\ \ \ \ , \ \ \ \ \begin{tikzpicture}[scale=0.5, baseline = -.1cm]
\draw  (-0.5,-1) node[below] {n} -- (0.5,0.5);
\draw  ( 0.5,-1)node[below] {a} -- (-0.5,1) ;
\mydotw{(0.5,0.5)};
\end{tikzpicture}= \ \ (-1)^{an}\ \  \begin{tikzpicture}[scale=0.5, baseline = -.1cm]
\draw  (-0.5,-1) node[below] {n} -- (-0.5,-0.5);
\draw  ( 0.5,-1)node[below] {a} -- (-0.5,1) ;
\mydotw{(-0.5,-0.5)};
\end{tikzpicture}
\end{equation}

\begin{equation}\label{SL-}
\text{SL}^{-}_{n}\ \text{relations}:\ \ \ \ \ \ \ \ \ \ \begin{tikzpicture}[scale=0.5, baseline = -.1cm]
\draw  (-0.5,-1) node[below] {a} -- (0.5,1);
\draw  ( 0.5,-1)node[below] {n} -- (-0.5,0.5) ;
\mydotw{(-0.5,0.5)};
\end{tikzpicture}=\ \ (-1)^{a}\ \ \begin{tikzpicture}[scale=0.5, baseline = -.1cm]
\draw  (-0.5,-1) node[below] {a} -- (0.5,1);
\draw  ( 0.5,-1)node[below] {n} -- (0.5,-0.5) ;
\mydotw{(0.5,-0.5)};
\end{tikzpicture}\ \ \ \ , \ \ \ \ \begin{tikzpicture}[scale=0.5, baseline = -.1cm]
\draw  (-0.5,-1) node[below] {n} -- (0.5,0.5);
\draw  ( 0.5,-1)node[below] {a} -- (-0.5,1) ;
\mydotw{(0.5,0.5)};
\end{tikzpicture}= \ \ (-1)^{a}\ \ \begin{tikzpicture}[scale=0.5, baseline = -.1cm]
\draw  (-0.5,-1) node[below] {n} -- (-0.5,-0.5);
\draw  ( 0.5,-1)node[below] {a} -- (-0.5,1) ;
\mydotw{(-0.5,-0.5)};
\end{tikzpicture}
\end{equation}

It was pointed out to us by an anonymous referee that the relations on the right in \ref{SL+} and \ref{SL-} follow immediately from the relations on the left by simply adding the appropriate crossing beneath the diagram, and using the fact that it is the inverse of the given crossing. Thus our presentation is redundant. Furthermore, the vertically reflected versions of the relations in \ref{SL+} and \ref{SL-} follow from the given relations and $\ref{univalent}$, which will be crucial in our arguments.

Notice that if $n$ is odd, the $\text{SL}^{\pm}_{n}$ relations agree, but if $n$ is even these relations are genuinely different. We define $\text{Web}(\text{SL}^{+}_{n})$ (respectively $\text{Web}(\text{SL}^{-}_{n})$) to be the category generated by trivalent merges, splits, and crossings as in $\text{PolyWeb}(\text{GL}_{n})$ satisfying the defining relations of $\text{PolyWeb}(\text{GL}_{n})$, and in addition isomorphisms from $n$ to the identity satisfying the above relations. Notice that the crossing in $\text{Web}(\text{SL}^{+}_{n})$ is still a braiding (in the sense of \cite[Definition 8.1.1]{MR3242743}) when $n$ is even, but when $n$ is odd it is not natural with respect to the isomorphism from $n$ to $\varnothing$.

We claim that in either of these cases, the resulting category is rigid. Indeed, using equation \ref{braiddefine} we see 

\begin{equation}
\begin{tikzpicture}[scale=0.5, baseline = -.1cm]
\draw  (-0.7,-1) node[below] {a} -- (0.7,1) node[above] {a};
\draw  ( 0.7,-1)node[below] {n} -- (-0.7,1) node[above] {n};
\end{tikzpicture}=  \ \ \ (-1)^{a}\ \ \  \begin{tikzpicture}[scale=0.5, baseline = -.1cm]
\draw (-0.8,-1) node[below] {a} -- (-0.8,1) node[above] {n};
\draw (0.8,-1) node[below] {n} -- (0.8,1) node[above] {a};
\draw (0.8,-0.2)--(-0.8,0.2);
\node(none) at (0,0.5) {\small n-a};
\end{tikzpicture}
\end{equation}

\begin{equation}
\begin{tikzpicture}[scale=0.5, baseline = -.1cm]
\draw  (-0.7,-1) node[below] {n} -- (0.7,1) node[above] {n};
\draw  ( 0.7,-1)node[below] {a} -- (-0.7,1) node[above] {a};
\end{tikzpicture}=  \ \ \ (-1)^{a}\ \ \  \begin{tikzpicture}[scale=0.5, baseline = -.1cm]
\draw (-0.8,-1) node[below] {n} -- (-0.8,1) node[above] {a};
\draw (0.8,-1) node[below] {a} -- (0.8,1) node[above] {n};
\draw (0.8,0.2)--(-0.8,-0.2);
\node(none) at (0,0.5) {\small n-a};
\end{tikzpicture}
\end{equation}

Then using the isomorphism $n$ to $\varnothing$ and assuming relation $\ref{univalent}$, its easy to see the $\text{SL}^{\pm}_{n}$ relations can be restated as follows:

\begin{equation}\label{SL+rigid}
\text{SL}^{+}_{n}\ \text{relations}:\ \ \ \ \ \ \ \ \ \ \begin{tikzpicture}[scale=0.6, baseline = -.1cm]
\draw  (0,-1) node[below] {a} -- (0,1);
\end{tikzpicture}=\ \ \ (-1)^{a(n+1)}\ \ \  \begin{tikzpicture}[scale=0.6, baseline = -.1cm]
\draw (-0.7,-1) node[below] {a} -- (-0.7,1) ;
\draw (0.7,-1) -- (0.7,1) node[above] {a};
\draw (0.7,-0.2)--(-0.7,0.2);
\node(none) at (0,0.5) {\small n-a};
\mydotw{(0.7,-1)};
\mydotw{(-0.7,1)};
\end{tikzpicture}=\ \ \ (-1)^{a(n+1)}\ \ \  
\begin{tikzpicture}[scale=0.6, baseline = -.1cm]
\draw (-0.7,-1)  -- (-0.7,1) node[above] {a};
\draw (0.7,-1) node[below] {a} -- (0.7,1);
\draw (0.7,0.2)--(-0.7,-0.2);
\node(none) at (0,0.5) {\small n-a};
\mydotw{(-0.7,-1)};
\mydotw{(0.7,1)};
\end{tikzpicture}
\end{equation}

\begin{equation}\label{SL-rigid}
\text{SL}^{-}_{n}\ \text{relations}:\ \ \ \ \ \ \ \ \ \ \begin{tikzpicture}[scale=0.6, baseline = -.1cm]
\draw  (0,-1) node[below] {a} -- (0,1);
\end{tikzpicture}=\ \ \ \ \ \  \begin{tikzpicture}[scale=0.6, baseline = -.1cm]
\draw (-0.7,-1) node[below] {a} -- (-0.7,1) ;
\draw (0.7,-1) -- (0.7,1) node[above] {a};
\draw (0.7,-0.2)--(-0.7,0.2);
\node(none) at (0,0.5) {\small n-a};
\mydotw{(0.7,-1)};
\mydotw{(-0.7,1)};
\end{tikzpicture}=\ \ \ \ \ \  
\begin{tikzpicture}[scale=0.6, baseline = -.1cm]
\draw (-0.7,-1)  -- (-0.7,1) node[above] {a};
\draw (0.7,-1) node[below] {a} -- (0.7,1);
\draw (0.7,0.2)--(-0.7,-0.2);
\node(none) at (0,0.5) {\small n-a};
\mydotw{(-0.7,-1)};
\mydotw{(0.7,1)};
\end{tikzpicture}
\end{equation}

In either case, the category $\text{Web}(\text{SL}^{\pm}_{n})$ is rigid, where the two-sided dual of the object $a$ is $n-a$. The only difference between these categories is whether or not the merges and splits can be used directly to define duality maps (otherwise, they must be appropriately normalized by a sign). 

\begin{rem}\label{tilting} If the field $\mathbbm{k}$ is algebraically closed, then the idempotent completion of $\text{Web}(\text{SL}^{+}_{n})$ is equivalent to the category of tilting modules for $\text{SL}_{n}$. To see this, recall the category of tilting modules $\text{Tilt}(\text{SL}_{n})$ is equivalent to the idempotent completion of the full subcategory of $\text{Rep}(\text{SL}_{n})$ generated by tensor products of exterior powers of the defining representation \cite{MR1200163}. The fiber functor $F$ from Remark \ref{standardfunctorpolyweb} extends to a functor $\widehat{F}$ on $\text{Web}(\text{SL}^{+}_{n})$ by picking an arbitrary isomorphism from the determinant representation (i.e. $\bigwedge^{n} V$) to the trivial representation, and assigning the new generators in $\text{Web}(\text{SL}^{+}_{n})$ to this morphism and its inverse. Since the $\text{SL}^{+}_{n}$ relation with the crossing will be satisfied, it is clear that this defines a monoidal functor from $\text{Web}(\text{SL}^{+}_{n})$ to $\text{Tilt}(\text{SL}_{n})$. It remains to show this functor is fully faithful, which follows in a straightforward way from \cite[Remark 4.15]{MR4135417}. This was pointed out to us by Victor Ostrik. We will provide details for the convenience of the reader.

Let $V$ denote the defining $n$ dimensional representation of both $\text{GL}_{n}$ and $\text{SL}_{n}$. For an object $k=(k_{1},\ \dots,\ k_{s})$ in $\text{Web}(\text{SL}^{+}_{n})$, we denote by $V_{k}=\otimes^{s}_{i=1} \bigwedge^{k_{i}} V$. For two sequences $k=(k_{1},\ \dots,\ k_{s}), m=(m_{1},\ \dots,\ m_{t})$, we claim that the functor $\widehat{F}$ induces an isomorphism $$\text{Web}(\text{SL}^{+}_{n})(k,l)\rightarrow \text{Rep}(\text{SL}_{n})(V_{k}, V_{m})\footnote{We adopt the convention that $\mathcal{C}(a,b)$ denotes the space of morphisms with source $a$ and target $b$ in a category $\mathcal{C}$.}.$$ 

Note that $\text{Rep}(\text{SL}_{n})(V_{k}, V_{l})\ne 0$ implies $\sum^{s}_{i=1} k_{i}\equiv \sum^{t}_{j=1} m_{j}\ \text{mod}\ n$. We consider the case $\sum^{s}_{i=1} k_{i}\le\sum^{t}_{j=1} m_{j}$ (the other case is completely analogous). Suppose $\sum^{s}_{i=1} k_{i} +l\cdot n= \sum^{t}_{j=1} m_{j}$ for some $l\ge 0$. Set $\overline{k}:=(k_{1}, \cdots, k_{s}, n, \cdots, n)=k\otimes n^{l}$, where we adjoin $l$ entries to the right of $k$ labeled by $n$. Then the have the following commuting diagram

 $$\begin{tikzcd}
    \text{Rep}(\text{GL}_{n})(V_{\overline{k}}, V_{m}) \arrow[swap]{d}{\text{res}} & \text{PolyWeb}(\text{GL}_{n})(\overline{k}, m)\arrow[swap]{d}{I} \arrow[swap]{l}{F} \\
    \text{Rep}(\text{SL}_{n})(V_{\overline{k}}, V_{m}) \arrow{d} & \text{Web}(\text{SL}^{+}_{n})(\overline{k}, m)\arrow{d} \arrow[swap]{l}{\widehat{F}}\\
    \text{Rep}(\text{SL}_{n})(V_{k}, V_{m}) & \text{Web}(\text{SL}^{+}_{n})(k, m)\arrow[swap]{l}{\widehat{F}}
  \end{tikzcd}
$$

\noindent where top right vertical arrow $I$ is the defining functor from polynomial $\text{GL}_{n}$ webs to $\text{SL}_{n}$ webs and the top left vertical arrow is the restriction functor. The lower left vertical arrow arises from applying our choice of isomorphism $\bigwedge^{n} V$ to the trivial representation to the $l$ extraneous factors of $n$ in $\overline{k}=k\otimes n^{l}$ (used in defining our extension of $F$ to $\text{Web}(\text{SL}^{+}_{n}))$, and the lower right vertical arrow arises by applying the univalent vertex to additional factors of $n$ in $\text{Web}(\text{SL}_{n})$. The commutativity of this diagram follows directly from the definition of $\hat{F}$.

The top arrow is an isomorphism by \cite[Remark 4.15]{MR4135417}. Recall that if $\sum^{s}_{i=1} k_{i}=\sum^{t}_{j=1} m_{j}$, then restriction $\text{res}: \text{Rep}(\text{GL}_{n})(V_{k}, V_{m})\rightarrow  \text{Rep}(\text{SL}_{n})(V_{k}, V_{m})$ is an isomorphism (this can be easily deduced by writing an arbitrary element of $\text{GL}_{n}$ as a scalar times an element of $\text{SL}_{n}$). Thus the upper left vertical arrow is an isomorphism. The lower vertical arrows are obviously isomorphisms. Hence it suffices to show the upper right arrow is surjective, which will imply to the two remaining horizontal arrows associated to $F$ are isomorphisms. 

Take a diagram $D\in \text{Web}(\text{SL}^{+}_{n})(\overline{k}, m)$, and for each univalent vertex that appears, pull it to the far right of the diagram adding an appropriate sign and a crossing when passing over strings according to the defining $\text{SL}^{+}_{n}$ relation \ref{SL+}. Attach these $n$-strands to either the top or bottom boundary of the diagram (depending on which way the univalent vertex was pointing) so as to not create any critical points in the strings. This results in a scalar multiple of a diagram $D^{\prime}\in \text{PolyWeb}(\text{GL}_{n})(k^{\prime}, m^{\prime})$ where $k^{\prime}=(k_{1}, \cdots, k_{s}, n, \cdots n)=\overline{k}\otimes n^{\otimes r}$ and $(m_{1}, \cdots, m_{t}, n, \cdots n)=m\otimes n^{\otimes r}$ with $r$ factors of $n$ adjoined to $m$, and $r+l$ adjoined to $k$.

Notice that the map $\text{PolyWeb}(\text{GL}_{n})(\overline{k}, m)\rightarrow \text{PolyWeb}(\text{GL}_{n})(\overline{k}\otimes n, m\otimes n)$ obtained by adding a vertical $n$ strand to the right of a diagram is an isomorphism, since the Web category embeds as a full subcategory of $\text{Rep}(\text{GL}_{n})$ (\cite[Remark 4.15]{MR4135417}) with $n$ corresponding to the determinant representation, which is invertible. Therefore, there exists an element $D^{\prime \prime}\in \text{PolyWeb}(\text{GL}_{n})(\overline{k}, m) $ such that $D^{\prime \prime}\otimes 1_{n^{\otimes r}}=D^{\prime}$. Placing univalent vertices on the $r$ right most $n$-strands shows that the image of $D^{\prime \prime}$ in $\text{Web}(\text{SL}^{+}_{n})(\overline{k}, m)$ is precisely $D$. This gives surjectivity of the upper right arrow as desired.

\end{rem}

\begin{rem}\label{spidercomparison} The categories $\text{Web}(\text{SL}^{\pm}_{n})$ bear some similarities to the quantum $\text{SL}_{n}$ spiders of \cite{MR3263166} (extending Kuperberg's $\text{SL}_{3}$ spider \cite{MR1403861}) at $q=\pm 1$, though the exact relationship is unclear. As we've seen above $\text{Web}(\text{SL}^{+}_{n})$ can be naturally interpreted in the context of representation theory, but a natural interpretation of $\text{Web}(\text{SL}^{-}_{n})$ is less obvious. While we do not provide such an interpretation here, we point out that for $n=2$ these categories are in fact already well known.

We briefly recall the Temperley-Lieb-Jones category with loop parameter $\delta\in \mathbbm{k}$, denoted $\text{TLJ}(\delta)$ (See \cite{MR1280463,MR1292673} or \cite{1502.06845} for a more recent treatment). $\text{TLJ}(\delta)$ is the $\mathbbm{k}$-linear diagrammatic strict monoidal categories generated by cup and cap morphisms $\begin{tikzpicture}[scale=0.6, baseline = -.1cm]
    \draw
    (0.5,0.25)
    arc(360:180:0.5);
  \end{tikzpicture}\ \text{and}\ \begin{tikzpicture}[scale=0.6, baseline = -.1cm]
    \draw
    (-0.5,-0.25)
    arc(0:180:0.5);
  \end{tikzpicture}$ satisfying the relations

 $$\begin{tikzpicture}[scale=0.6, baseline = -.1cm]
    \draw
    (-0.5,1)--(-0.5,0)
    arc(180:360:0.25)
    arc(180:0:0.25)
    (0.5,0)--(0.5,-1);
  \end{tikzpicture}\ =\ \begin{tikzpicture}[scale=0.6, baseline = -.1cm]
    \draw
    (0,-1)--(0,1);
  \end{tikzpicture}\ = \ \begin{tikzpicture}[scale=0.6, baseline = -.1cm]
    \draw
    (0.5,1)--(0.5,0)
    arc(360:180:0.25)
    arc(0:180:0.25)
    (-0.5,0)--(-0.5,-1);
  \end{tikzpicture}\ \ \ \ \ \ \ \  \text{and}\ \ \ \ \ \ \ \begin{tikzpicture}[scale=0.6, baseline = -.1cm]
    \draw
    (.25,0) arc(0:360:0.5);
  \end{tikzpicture}=\delta$$

When $\delta=2\in \mathbbm{k}$, it is straightforward to check that the assignment which sends the single string to $1$ and 

 $$\begin{tikzpicture}[scale=0.6, baseline = -.1cm]
    \draw
    (0.5,0.5)
    arc(360:180:0.5);
  \end{tikzpicture}\ \mapsto \begin{tikzpicture}[scale=0.6, baseline = -.1cm]
\draw (0,-0.5)  -- (0,0) ;
\draw (0,0) -- (-0.5,0.5) node[above] {1};
\draw (0,0)--(0.5,0.5) node[above] {1};
\mydotw{(0,-0.5)};
\end{tikzpicture}\ \ \ \text{and}\ \ \  \begin{tikzpicture}[scale=0.6, baseline = -.1cm]
    \draw
    (-0.5,-0.25)
    arc(0:180:0.5);
  \end{tikzpicture}\ \mapsto \begin{tikzpicture}[scale=0.6, baseline = -.1cm]
\draw (-0.5,-0.5) node[below] {1}  -- (0,0) ;
\draw (0.5,-0.5) node[below] {1} -- (0,0);
\draw (0,0)--(0,0.5) ;
\mydotw{(0,0.5)};
\end{tikzpicture}$$ 

\noindent extends to an equivalence $\text{TLJ}(2)\cong \text{Web}(\text{SL}^{-}_{2})$. Similarly for loop parameter $\delta=-2\in \mathbbm{k}$, the assignment

$$\begin{tikzpicture}[scale=0.6, baseline = -.1cm]
    \draw
    (0.5,0.5)
    arc(360:180:0.5);
  \end{tikzpicture}\ \mapsto \begin{tikzpicture}[scale=0.6, baseline = -.1cm]
\draw (0,-0.5)  -- (0,0) ;
\draw (0,0) -- (-0.5,0.5) node[above] {1};
\draw (0,0)--(0.5,0.5) node[above] {1};
\mydotw{(0,-0.5)};
\end{tikzpicture}\ \ \ \text{and}\ \ \  \begin{tikzpicture}[scale=0.6, baseline = -.1cm]
    \draw
    (-0.5,-0.25)
    arc(0:180:0.5);
  \end{tikzpicture}\ \mapsto - \begin{tikzpicture}[scale=0.6, baseline = -.1cm]
\draw (-0.5,-0.5) node[below] {1}  -- (0,0) ;
\draw (0.5,-0.5) node[below] {1} -- (0,0);
\draw (0,0)--(0,0.5) ;
\mydotw{(0,0.5)};
\end{tikzpicture}$$ 

\noindent extends to a monoidal equivalence $\text{TLJ}(-2)\cong \text{Web}(\text{SL}^{+}_{2})$.

\end{rem}

\begin{comment}
\begin{lem}\label{theotherpartofsquareswitch} In $\text{Web}(\text{SL}^{-}_{n})$ the cases when either $c$ or $d$ is $n-1$ in $\ref{squareswitch1}$ and \ref{squareswitch2} hold automatically.
\end{lem} 

\begin{proof}

In \ref{squareswitch1}, if $d=n-1$, we must have $a=n$, $b=1$, and thus the relation reduces to 

\begin{equation}\label{squareswitch1}
 \begin{tikzpicture}[scale=0.9, baseline = -.1cm]
\draw (-0.5,-1) node[below] {n} -- (-0.5,-0.5) ;
\draw (-0.5,-0.5) -- (0.5,-0.2) ;
\draw (-0.5,-0.5) -- (-0.5,0.5);
\draw (-0.5,0.5) -- (-0.5,1) node[above] {c+1};
\draw (0.5,0.2) -- (-0.5,0.5);
\draw (0.5,-1) node[below] {1} -- (0.5,-0.2);
\draw (0.5,-0.2)--(0.5,0.5);
\draw (0.5,0.5)--(0.5,1) node[above] {n-c} ;
\node(none) at (0,0.7) {c};
\node(none) at (0,-0.7) {n-1};
\end{tikzpicture}=\ \ \sum_{t}\ \ {a-b+c-d\choose t}\ \ \begin{tikzpicture}[scale=0.9, baseline = -.1cm]
\draw (-0.5,-1) node[below]{n}--(-0.5,1);
\draw (0.5,-1) node[below]{1}--(0.5,1);
%\draw (0.5,-0.5)--(-0.5,-0.2);
\draw (-0.5,0.2)--(0.5,0.5);
\node(none) at (0,0.7) {n-1-c};
%\node(none) at (0,-0.7) {c-t};
\end{tikzpicture}
\end{equation}

\end{proof}
\end{comment}

%%%%%%%%%%%%%%%%%%%%%%%%%%%%%%%%%%%%%%%%%%%%%%%%%%%%%%%%%

\subsection{$\text{Vec}(\Gamma)$}

Let $\Gamma$ be a group and fix a field $\mathbbm{k}$. Then the rigid monoidal category $\text{Vec}(\Gamma)$ is the category of finite dimensional \textit{$\Gamma$-graded vector spaces} over $\mathbbm{k}$. Objects are finite dimensional vector spaces with a $\Gamma$-grading and morphisms are linear maps that respect the grading.

For two objects $V=\bigoplus_{g\in \Gamma} V_{g}$, $W=\bigoplus_{g\in \Gamma} W_{g}$, the monoidal product has $h$ graded component $(V\otimes W)_{h}:=\bigoplus_{g\in \Gamma} V_{g}\otimes W_{g^{-1}h}$. This direct sum is finite since only finitely many components of each vector space are non-zero. The associator is inherited from $\text{Vec}$. This category is rigid, with $(V_{g})^{*}\cong (V^{*})_{g^{-1}}$ with evaluation and coevalution inherited from $\text{Vec}$. The forgetful functor $\text{Forget}:\text{Vec}(\Gamma)\rightarrow \text{Vec}$ simply forgets the grading of the underlying vector spaces.

Simple objects in $\text{Vec}_{\mathbbm{k}}(\Gamma)$ are isomorphic to a copy of $\mathbbm{k}$ graded by elements $g\in \Gamma$, denoted $\mathbbm{k}_{g}$. If $gh=k$, then the definition of the monoidal product gives us canonical isomorphisms $\lambda^{k}_{g,h}:\mathbbm{k}_{g}\otimes \mathbbm{k}_{h}\rightarrow \mathbbm{k}_{k}$, defined by the linear extension of $1_{g}\otimes 1_{h}\mapsto 1_{gh} $. We draw these isomorphisms and their inverses as trivalent vertices labeled by group elements:

\begin{equation}\label{groupvertices}
\lambda^{k}_{g,h}=:\begin{tikzpicture}[scale=0.7, baseline = -.1cm]
\draw (90:1) node[above] {k}  -- (0,0);
\draw (220:1) node[below] {g} -- (0,0);
\draw (320:1) node[below] {h}  -- (0,0);
\end{tikzpicture}\ \ \ \  \text{and}\ \ \ \ \ 
\text{y}^{g,h}_{k}=:\begin{tikzpicture}[scale=0.7, baseline = -.1cm]
\draw (40:1) node[above] {g} -- (0,0);
\draw (140:1) node[above] {h} -- (0,0);
\draw (270:1)  node[below] {k} -- (0,0);
\end{tikzpicture}
\end{equation}

That these are mutually inverse means $\lambda^{k}_{g,h}\circ \text{y}^{g,h}_{k}=1_{k}$ and $\text{y}^{g,h}_{k}\circ \lambda^{k}_{g,h}=1_{g}\otimes 1_{h}$, graphically depicted by the relations

\begin{equation}
\begin{tikzpicture}[scale=0.7, baseline = -.1cm]
\draw (-0.5,-1) node[below] {g}  -- (0,-0.3);
\draw (0.5,-1) node[below] {h} -- (0,-0.3);
\draw (0,-0.3)--(0,0.3);
\draw (0,0.3)--(-0.5,1);
\draw (0,0.3)--(0.5,1);
\end{tikzpicture}\ \ \ \ = \ \ \ \ 
\begin{tikzpicture}[scale=0.7, baseline = -.1cm]
\draw (-0.5,-1) node[below] {g}  -- (-0.5,1);
\draw (0.5,-1) node[below] {h} -- (0.5,1);
\end{tikzpicture},\ \ \ \ \ \ \ \ \ \ 
\begin{tikzpicture}[scale=0.7, baseline = -.1cm]
\draw (0,-1)node[below]{a+b} -- (0,-0.5) ;
\draw (270:0.5) arc (270:90:0.5);
\draw (-90:0.5) arc (-90:90:0.5);
\draw (0,0.5)--(0,1) node[above]{gh};
\node(none) at (1,0) {g};
\node(none) at (-1,0) {h};
\end{tikzpicture}\ \ \ \ = \ \ \ 
\begin{tikzpicture}[scale=0.7, baseline = -.1cm]
\draw (0,-1)--(0,1);
\end{tikzpicture}
\end{equation}

These isomorphisms satisfy the associativity conditions given by the following diagrams (and ignoring vector space associators)

\begin{equation}\label{groupassociativity}
\begin{tikzpicture}[scale=0.7, baseline = 0 cm]
\draw (-1,1) node[above]{g} -- (-0.5,0.5);
\draw (-0.5,0.5) --  (0,0);
\draw (0,1)node[above]{h}--(-0.5,0.5);
\draw (1,1)node[above]{k}--(0,0);
\draw (0,0)--(0,-0.5)node[below]{ghk};
\node(none) at (-0.6,0) {gh};
\end{tikzpicture}\ \ \ =\ \ \ 
\begin{tikzpicture}[scale=0.7, baseline = 0cm]
\draw (-1,1) node[above]{g} -- (0,0) ;
\draw (0.5,0.5) -- (0,0);
\draw (0,1)node[above]{h}--(0.5,0.5);
\draw (1,1)node[above]{k}--(0.5,0.5);
\draw (0,0)--(0,-0.5)node[below]{ghk};
\node(none) at (0.6,0) {hk};
\end{tikzpicture}\ \ \ \  ,\ \ \ \ \ \ \ \ \ 
\begin{tikzpicture}[scale=0.7, baseline = -.3 cm]
\draw (-1,-1) node[below]{g} -- (-0.5,-0.5);
\draw (-0.5,-0.5) --  (0,0);
\draw (0,-1)node[below]{h}--(-0.5,-0.5);
\draw (1,-1)node[below]{k}--(0,0);
\draw (0,0)--(0,0.5)node[above]{ghk};
\node(none) at (-0.6,0) {gh};
\end{tikzpicture}\ \ \ =\ \ \ 
\begin{tikzpicture}[scale=0.7, baseline = -.3cm]
\draw (-1,-1) node[below]{g} -- (0,0) ;
\draw (0.5,-0.5) -- (0,0);
\draw (0,-1)node[below]{h}--(0.5,-0.5);
\draw (1,-1)node[below]{k}--(0.5,-0.5);
\draw (0,0)--(0,0.5)node[above]{ghk};
\node(none) at (0.6,0) {hk};
\end{tikzpicture}
\end{equation}

Now, let $\mathcal{T}$ be a triangle presentation, and $\Gamma=\Gamma_{\mathcal{T}}$. 

\begin{defn}\label{trianglemorphism} An isomorphism $\text{y}^{u,v}_{w}$ or $\lambda^{w}_{u,v}$ in $\text{Vec}(\Gamma)$ is a \textit{triangle morphism} if $u,v,w\in \Pi$ and $(u,v,\sigma(w))\in \mathcal{T}$. 
\end{defn}

%\begin{rem} If $uv=w$ for $u\in \Pi_{a}$, $v\in \Pi_{b}$, and $w\in \Pi_{a+b}$, then automatically $(u,v,\sigma(w))\in \mathcal{T}$. This follows from the normal form of elements in $\Gamma$ (see BLAH) but also can be easily seen geometrically from the structure of affine buildings.
%\end{rem}

\begin{rem}\label{interpretweirdone} Using the graphical calculus for $\text{Vec}(\Gamma)$, we can now give a natural interpretation to the condition \ref{theweirdone}, Definition \ref{tridefn}. Let $u,v,w,p,q\in \Pi$ be generators of $\Gamma$, and suppose $(u,v,\sigma(p)), (p,w,\sigma(q))\in \mathcal{T}$. Then $uv=p$ and $pw=q$, so we have an isomorphism in $\text{Vec}(\Gamma)$

\begin{equation}
(\text{y}^{u,v}_{p}\otimes 1_{w})\circ \text{y}^{p,w}_{q}: \mathbbm{k}_{q}\rightarrow \mathbbm{k}_{u}\otimes \mathbbm{k}_{v}\otimes \mathbbm{k}_{w}
\end{equation}

The associativity condition in $\text{Vec}(\Gamma)$ above tells us there is an element of the group $g=uv\in \Gamma$ so that 

\begin{equation}\label{trianglegroupassoc}
(\text{y}^{u,v}_{p}\otimes 1_{w})\circ \text{y}^{p,w}_{q}=(1_{u}\otimes \text{y}^{v,w}_{g})\circ \text{y}^{u,g}_{q}
\end{equation}

\noindent What the condition \ref{theweirdone}, Definition \ref{tridefn} (or rather, its equivalent conditions from Proposition \ref{betterweirdone}) tells us is that if $\text{dim}(u)+\text{dim}(v)<n$ and $\text{dim}(p)+\text{dim}(w)<n$, we can choose $g\in \Pi$ so that $\text{y}^{v,w}_{g}$ and $\text{y}^{u,g}_{q}$ are \textit{triangle morphisms}, i.e. $(u, g, \sigma(q)), (v,w,\sigma(g))\in \mathcal{T}$.
\end{rem}

\section{The functor}\label{functor}

Let $\mathcal{T}$ be a triangle presentation of type $\tilde{\text{A}}_{n-1}$ on a projective geometry $\Pi$ of order $q$. We introduce the notation

$$\mathcal{T}_{a,b}:=\{(u,v,w)\in \mathcal{T}\ : u\in \Pi_{a},\ v\in \Pi_{b}\}.$$

If $(u,v,w)\in \mathcal{T}_{a,b}$ then $w\in \Pi_{n-a-b}$ if $a+b<n$ and $w\in \Pi_{2n-a-b}$ if $a+b>n$ (note $a+b\ne n$). Now, define the objects in $\text{Vec}(\Gamma)$

\begin{align*}
V_{a}&=\bigoplus_{u\in \Pi_{a}} \mathbbm{k}_{u}\ \ \text{for}\ 0<a<n\\
V_{n}&=\mathbbm{k}_{1}
\end{align*}

\medskip

We will define a monoidal functor from $\text{Web}(\text{SL}^{-}_{n})\rightarrow \text{Vec}(\Gamma)$ by assigning the generating objects $a\mapsto V_{a}$ and the monoidal unit $\varnothing \mapsto \mathbbm{k}_{1}$. The generating morphisms are mapped to linear maps between tensor powers of the $V_{a}$ as follows:

\begin{equation}\label{trianglemerge}
\text{For}\ a+b<n,\ \ \ \ \ \ \ \begin{tikzpicture}[scale=0.5, baseline = -.1cm]
\draw (90:1) node[above] {a+b}  -- (0,0);
\draw (220:1) node[below] {a} -- (0,0);
\draw(320:1) node[below] {b}  -- (0,0);
\end{tikzpicture}\mapsto \ \ \ \ \ \ \ \ \  \bigoplus_{(u,v,\sigma(w))\in\mathcal{T}_{a,b}} \lambda^{w}_{u,v} \ \ \ \ :\ \ \ \ V_{a}\otimes V_{b}\rightarrow V_{a+b}
\end{equation}

\bigskip

\begin{equation}
\begin{tikzpicture}[scale=0.5, baseline = -.1cm]
\draw (90:1) node[above] {n}  -- (0,0);
\draw (220:1) node[below] {a} -- (0,0);
\draw(320:1) node[below] {n-a}  -- (0,0);
\end{tikzpicture}\mapsto \ \ \ \ \ \ \ \ \ \bigoplus_{u\in \Pi_{a}}\ \ \ \  \lambda^{1}_{u, \sigma(u)}\ \ \ \ :\ \ \ \ V_{a}\otimes V_{n-a}\rightarrow V_{n}=\mathbbm{k}_{1}
\end{equation}

\bigskip

\begin{equation}\label{trianglesplit}
\displaystyle
\text{For}\ a+b<n,\ \ \ \ \ \ \ \begin{tikzpicture}[scale=0.5, baseline = -.1cm]
 \draw (40:1) node[above] {b} -- (0,0);
\draw (140:1) node[above] {a} -- (0,0);
\draw (270:1)  node[below] {a+b} -- (0,0);
\end{tikzpicture}\mapsto \ \ \ \ \ \ \ \ \  \bigoplus_{(v,w,\sigma(u))\in\mathcal{T}_{a, b}} \text{y}^{v,w}_{u}  : V_{a+b}\rightarrow V_{a}\otimes V_{b} 
\end{equation}

\bigskip

\begin{equation}\label{trianglesplitn}
\begin{tikzpicture}[scale=0.5, baseline = -.1cm]
\draw (40:1) node[above] {n-a} -- (0,0);
\draw (140:1) node[above] {a} -- (0,0);
\draw (270:1)  node[below] {n} -- (0,0);
\end{tikzpicture}\mapsto \ \ \ \ \ \ \ \ \ \bigoplus_{u\in \Pi_{a}}\ \ \ \ \  \text{y}^{u,\sigma(u)}_{1}: V_{n}\rightarrow V_{a}\otimes V_{n-a} 
\end{equation}

\bigskip

\begin{equation}\label{triangleunivalent}
\begin{tikzpicture}[scale=0.5, baseline = -.1cm]
\draw (0,-0.5) node[below]{n}--(0,0.5);
\mydotw{(0,0.5)};
\end{tikzpicture}\mapsto \ \ \ \ \ \ \ \ \ id_{\mathbbm{k}} : V_{n}=\mathbbm{k}_{1}\rightarrow \mathbbm{k}_{1}
\end{equation}

\medskip

\begin{equation}\label{triangleunivalent2}
\begin{tikzpicture}[scale=0.5, baseline = -.1cm]
\draw (0,-0.5) --(0,0.5)node[above]{n};
\mydotw{(0,-0.5)};
\end{tikzpicture}\mapsto \ \ \ \ \ \ \ \ \ id_{\mathbbm{k}} : \mathbbm{k}_{1}\rightarrow \mathbbm{k}_{1}=V_{n}
\end{equation}

\bigskip

\noindent where $\lambda$ and $\gamma$ are triangle morphisms in $\text{Vec}(\Gamma)$, defined in \ref{trianglemorphism}. We will show these assignments extend to a functor from $\text{Web}(\text{SL}^{-}_{n})\rightarrow \text{Vec}(\Gamma)$ when the characteristic of the field satisfies the appropriate conditions. First we have the following Lemma which is the main technical part of the paper.

\begin{lem}\label{squarelemma1} If $\text{char}(\mathbbm{k})=p$ satisfies $q\equiv 1\ \text{mod}\ p$, the following special cases of the square switch relation are satisfied by the morphisms \ref{trianglemerge}-\ref{trianglesplitn} defined above in the category $\text{Vec}(\Gamma)$.
\end{lem}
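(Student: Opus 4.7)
My plan is to verify the lemma by first reducing to the two special $(a,1)$-square switch identities \ref{squareswitcha11} and \ref{squareswitch1aa} (as permitted by Proposition \ref{easytohardsquareswitch}), and then comparing matrix coefficients of both sides in the canonical homogeneous basis of $V_a \otimes V_1$ (respectively $V_1 \otimes V_a$). Since every assigned morphism lives in $\text{Vec}(\Gamma)$, the $\Gamma$-grading forces the matrix coefficient $\langle 1_{u'} \otimes 1_{v'} \mid - \mid 1_u \otimes 1_v\rangle$ to vanish unless $uv = u'v'$ in $\Gamma$, so only these grading-compatible pairs need to be checked.

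Tracing the LHS of \ref{squareswitcha11} through its four trivalent vertices, the composition decomposes as a sum over intermediate triples $(u_1, u_2, v_1) \in \Pi_{a-1} \times \Pi_1 \times \Pi_1$ subject to the four triangle presentation conditions $(u_1, u_2, \sigma(u))$, $(u_2, v, \sigma(u_2 v))$, $(v_1, v', \sigma(u_2 v))$, $(u_1, v_1, \sigma(u'))$ all in $\mathcal{T}$, where the middle vertical strand carries the group element $w := u_2 v$ and the top-right boundary is forced to be $v_2 = v'$. Consequently the matrix coefficient $\langle u', v' \mid \text{LHS} \mid u, v \rangle$ equals, as an element of $\mathbbm{k}$, the number of such valid triples. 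On the RHS, the merge-split term contributes $\mathbf{1}\{(u, v, \sigma(uv)) \in \mathcal{T}\} \cdot \mathbf{1}\{(u', v', \sigma(uv)) \in \mathcal{T}\}$ and the identity term contributes $(a-1)\,\mathbf{1}\{(u', v') = (u, v)\}$.

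The verification then splits into cases. In the diagonal case $(u', v') = (u, v)$ the group relations force $v_1 = u_2$, and the enumeration reduces to counting $u_2 \in \Pi_1$ satisfying the two incidence conditions imposed by $(u_1, u_2, \sigma(u)), (u_2, v, \sigma(u_2 v)) \in \mathcal{T}$. Applying Lemma \ref{intermediatesubspacenumber} the resulting cardinality is a $q$-binomial; the hypothesis $q \equiv 1 \pmod{p}$ reduces it to the corresponding ordinary binomial coefficient, which matches the RHS value ($a$ or $a-1$ according to whether $(u, v, \sigma(uv)) \in \mathcal{T}$). In the off-diagonal case, condition $6'$ (equivalently $6''$) of Proposition \ref{betterweirdone} provides the crucial rebracketing: given triangle data along two sides of the square, it asserts the existence and uniqueness of the completing intermediate element, forcing the LHS count to equal $1$ precisely when the RHS merge-split indicator is $1$, and $0$ otherwise.

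I expect the main obstacle to be the exhaustive off-diagonal case analysis, where one must verify that every intermediate tuple counted by the LHS matches a unique triangle configuration counted by the RHS while ruling out extraneous solutions, ensuring along the way that the formal group element $u_2 v (v')^{-1}$ is actually a generator in $\Pi_1$ and that all induced incidences in the projective geometry are consistent. This requires invoking condition $6$ essentially every time and carefully tracking how the incidence relations in $\Pi$ interact with the group-theoretic constraints in $\Gamma$. The mirror identity \ref{squareswitch1aa} is handled by a completely symmetric argument that interchanges the roles of the left and right strands and uses the mirror form of condition $6$.
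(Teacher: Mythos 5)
Your overall strategy coincides with the paper's: compare matrix coefficients of both sides in the homogeneous basis, interpret the left-hand coefficient as a count of admissible labelings of the square by elements of $\Pi$, handle the diagonal entries by counting incident subspaces via Lemma \ref{intermediatesubspacenumber} (so that $q$-binomial coefficients collapse to ordinary ones when $q\equiv 1 \bmod p$), and use Proposition \ref{betterweirdone} to match the off-diagonal entries. One framing correction: Proposition \ref{easytohardsquareswitch} is not used to \emph{reduce to} the $(a,1)$ identities --- the lemma \emph{is} exactly those two identities, and Proposition \ref{easytohardsquareswitch} is invoked only afterwards, in Theorem \ref{TriangleSLnThm}, to bootstrap from them to the general square-switch relations.

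That said, the decisive content of the lemma is left unexecuted, and one step of your plan would fail as stated. First, condition $6'$ alone does not ``force the LHS count to equal $1$'': the bound $L^{w,v}_{z,u}\le 1$ in the case $z\ne w$ rests on a projective-geometry argument (conditions \ref{incidence}, \ref{cyclicinvariance} and \ref{uniqueness} of Definition \ref{tridefn} force the $\Pi_{a-1}$-label to be $z\cap w$ and the $\Pi_{a+1}$-label on the right-hand side to be $z+w$), after which Proposition \ref{betterweirdone} must be applied \emph{twice}, once along each side of the square, with condition \ref{uniqueness} gluing the two resulting $\Pi_{2}$-elements into one. Second, ``invoking condition $6$ essentially every time'' breaks down at $a=n-1$: the hypothesis $\dim(u)+\dim(v)+\dim(w)<n$ of $6'$ is violated there, and the paper must treat $a=n-1$ (as well as the corner cases $a=1$ and $a=n$) by a separate direct computation. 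Third, your case split on $(u',v')=(u,v)$ versus not differs from the paper's split on the $\Pi_a$-components $z=w$ versus $z\ne w$; with your split you still owe the subcase where the $\Pi_a$-components agree but the $\Pi_1$-components differ, where one must check that \emph{both} sides vanish --- this follows from conditions \ref{lambdareflection}, \ref{cyclicinvariance} and \ref{uniqueness}, not from the $6'$ mechanism. These are precisely the places where the paper's proof does its work, so your outline is sound but does not yet constitute a proof.
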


\begin{equation}\label{squareswitcha1}\begin{tikzpicture}[scale=0.7, baseline = -.1cm]
\draw (-0.5,-1.2) -- (-0.5,-0.6) ;
\draw (-0.5,-0.6) -- (0.5,-0.4) ;
\draw (-0.5,-0.6) -- (-0.5,0.3);
\draw (-0.5,0.3) -- (-0.5,1.2);
\draw (0.5,0.4) -- (-0.5,0.6);
\draw (0.5,-1.2)--(0.5,-0.2);
\draw (0.5,-0.2)--(0.5,0.6);
\draw (0.5,0.6)--(0.5,1.2);
\node(none) at (-1,0) {a-1};
\node(none) at (1,0) {2};
\node(none) at (-0.9,1.2) {a};
\node(none) at (0.9,1.2) {1};
\node(none) at (-0.9,-1.2) {a};
\node(none) at (0.8,-1.2) {1};
\end{tikzpicture}\ =\ 
\begin{tikzpicture}[scale=0.7, baseline = -.1cm]
\draw (-0.5,-1.2) -- (0,-0.6) ;
\draw (0,-0.6) -- (0,0.6) ;
\draw (0,0.6) -- (-0.5,1.2);
\draw (0,0.6) -- (0.5,1.2);
\draw (0.5,-1.2)--(0,-0.6);
\node(none) at (0.7,0) {a+1};
\node(none) at (-0.9,1.2) {a};
\node(none) at (0.9,1.2) {1};
\node(none) at (-0.9,-1.2) {a};
\node(none) at (0.8,-1.2) {1};
\end{tikzpicture}\ +\ 
(a-1)\ \begin{tikzpicture}[scale=0.7, baseline = -.1cm]
\draw (-0.5,-1.2) -- (-0.5,1.2) ;
\draw (0.5,-1.2) -- (0.5, 1.2) ;
\node(none) at (-0.9,-1.2) {a};
\node(none) at (0.8,-1.2) {1};
\end{tikzpicture}
\end{equation}

\begin{equation}\label{squareswitch1a}
\begin{tikzpicture}[scale=0.7, baseline = -.1cm]
\draw (-0.5,-1.2) -- (-0.5,-0.6) ;
\draw (0.5,-0.6) -- (-0.5,-0.4) ;
\draw (-0.5,-0.6) -- (-0.5,0.3);
\draw (-0.5,0.3) -- (-0.5,1.2);
\draw (-0.5,0.4) -- (0.5,0.6);
\draw (0.5,-1.2)--(0.5,-0.2);
\draw (0.5,-0.2)--(0.5,0.6);
\draw (0.5,0.6)--(0.5,1.2);
\node(none) at (-1,0) {2};
\node(none) at (1,0) {a-1};
\node(none) at (-0.9,1.2) {1};
\node(none) at (0.9,1.2) {a};
\node(none) at (-0.9,-1.2) {1};
\node(none) at (0.8,-1.2) {a};
\end{tikzpicture}\ =
\begin{tikzpicture}[scale=0.7, baseline = -.1cm]
\draw (-0.5,-1.2) -- (0,-0.6) ;
\draw (0,-0.6) -- (0,0.6) ;
\draw (0,0.6) -- (-0.5,1.2);
\draw (0,0.6) -- (0.5,1.2);
\draw (0.5,-1.2)--(0,-0.6);
\node(none) at (0.7,0) {a+1};
\node(none) at (-0.9,1.2) {1};
\node(none) at (0.9,1.2) {a};
\node(none) at (-0.9,-1.2) {1};
\node(none) at (0.8,-1.2) {a};
\end{tikzpicture}\ +\ 
(a-1)\ \begin{tikzpicture}[scale=0.7, baseline = -.1cm]
\draw (-0.5,-1.2) -- (-0.5,1.2) ;
\draw (0.5,-1.2) -- (0.5, 1.2) ;
\node(none) at (-0.9,-1.2) {1};
\node(none) at (0.8,-1.2) {a};
\end{tikzpicture}
\end{equation}

\begin{proof}
We prove \ref{squareswitcha1}. Let $\text{LHS}$ and $\text{RHS}$ denote the linear operators in $\text{End}(V_{a}\otimes V_{1})$ defined by the left-hand and right-hand side of equation \ref{squareswitcha1} respectively. For $(z,u)\in \Pi_{a}\times \Pi_{1}$ let 

$$LHS(z\otimes u)=\sum_{(w,v)\in \Pi_{a}\times \Pi_{1}} L^{w,v}_{z,u} w\otimes v$$
$$RHS(z\otimes u)=\sum_{(w,v)\in \Pi_{a}\times \Pi_{1}} R^{w,v}_{z,u} w\otimes v$$

\noindent We will show $L^{w,v}_{z,u}=R^{w,v}_{z,u}$ in cases.

\medskip

\textbf{Corner cases}. First we consider corner cases. The case $a=1$ is trivial. Suppose $a=n$. Then since $V_{n}=\mathbbm{k}$,

$$L^{1,v}_{1,u}=\delta_{u=v}\ |\{r\in \Pi_{1}\ :\ u\sim \sigma(r)\}|=\delta_{u=v}\left[ {\begin{array}{c}
   n-1 \\
   n-2  \\
  \end{array} } \right]_{q}.$$

\noindent where the last equality uses Lemma \ref{intermediatesubspacenumber}. As $q=1$ in $\mathbbm{k}$, this reduces to $n-1$. Thus $L^{1,v}_{1,u}= \delta_{u=v} n-1$. Since the morphism defined by the first diagram on the right-hand side vanishes, we immediately see $R^{1,v}_{1,u}=\delta_{u=v}\ n-1$ as desired.

\medskip

\noindent Now assume $1<a<n$.  

\medskip

\textbf{Case 1}: $z\ne w$. In this case, the morphism described by the two parallel vertical on the right-hand side contributes $0$ to $R^{w,v}_{u,z}$. From a simple examination of the diagram on the left and the remaining diagram on the right we compute \begin{comment} First note that $L^{w,z}_{z,u}$ is an integer, and can be computed by counting the number of tuples $(p,q,r,s)\in \Pi_{a-1}\times \Pi_{2}\times \Pi_{1}\times \Pi_{1}$ such that the following morphism can be defined in $\text{Vec}(\Gamma)$ with $y$ and $\lambda$ morphisms:

$$\begin{tikzpicture}[scale=0.7, baseline = -.1cm]
\draw (-0.5,-1.2) -- (-0.5,-0.6) ;
\draw (-0.5,-0.6) -- (0.5,-0.4) ;
\draw (-0.5,-0.6) -- (-0.5,0.3);
\draw (-0.5,0.3) -- (-0.5,1.2);
\draw (0.5,0.4) -- (-0.5,0.6);
\draw (0.5,-1.2)--(0.5,-0.2);
\draw (0.5,-0.2)--(0.5,0.6);
\draw (0.5,0.6)--(0.5,1.2);
\node(none) at (-0.9,0) {p};
\node(none) at (1,0) {q};
\node(none) at (-0.9,1.2) {w};
\node(none) at (0.9,1.2) {v};
\node(none) at (-0.9,-1.2) {z};
\node(none) at (0.8,-1.2) {u};
\node(none) at (0,-0.8) {r};
\node(none) at (0,0.8) {s};
\end{tikzpicture}
$$

In other words 
\end{comment}
\small

\label{computingL}
$$L^{w,v}_{z,u}=|\ \{ (p,q,r,s)\in \Pi_{a-1}\times \Pi_{2}\times \Pi_{1}\times \Pi_{1}\ :\ \\
 (z,\sigma(r),\sigma(p)),(r,u,\sigma(q)),(p,s,\sigma(w)),(s,v,\sigma(q))\in \mathcal{T}\}|
$$

\label{computeR}
$$R^{w,v}_{z,u}=|\{p^{\prime}\in \Pi_{a+1}\ :\ (z,u,\sigma(p^{\prime})), (w,v, \sigma(p^{\prime}))\in \mathcal{T}\}|$$

\normalsize

We can interpret the elements $(p,q,r,s)$ and $p^{\prime}$ counted on the right-hand side of the above equalities as counting labellings of the diagrams below, such that each vertex corresponds to a triangle morphism:

$$\begin{tikzpicture}[scale=0.7, baseline = -.1cm]
\draw (-0.5,-1.2) -- (-0.5,-0.6) ;
\draw (-0.5,-0.6) -- (0.5,-0.4) ;
\draw (-0.5,-0.6) -- (-0.5,0.3);
\draw (-0.5,0.3) -- (-0.5,1.2);
\draw (0.5,0.4) -- (-0.5,0.6);
\draw (0.5,-1.2)--(0.5,-0.2);
\draw (0.5,-0.2)--(0.5,0.6);
\draw (0.5,0.6)--(0.5,1.2);
\node(none) at (-0.9,0) {p};
\node(none) at (1,0) {q};
\node(none) at (-0.9,1.2) {w};
\node(none) at (0.9,1.2) {v};
\node(none) at (-0.9,-1.2) {z};
\node(none) at (0.8,-1.2) {u};
\node(none) at (0,-0.8) {r};
\node(none) at (0,0.8) {s};
\end{tikzpicture}\ \ \ \ \ \text{and}\ \ \ \ \ \ 
\begin{tikzpicture}[scale=0.7, baseline = -.1cm]
\draw (-0.5,-1.2) -- (0,-0.6) ;
\draw (0,-0.6) -- (0,0.6) ;
\draw (0,0.6) -- (-0.5,1.2);
\draw (0,0.6) -- (0.5,1.2);
\draw (0.5,-1.2)--(0,-0.6);
\node(none) at (0.7,0) {$\text{p}^{\prime}$};
\node(none) at (-0.9,1.2) {w};
\node(none) at (0.9,1.2) {v};
\node(none) at (-0.9,-1.2) {z};
\node(none) at (0.8,-1.2) {u};
\end{tikzpicture}
$$

\bigskip

We first claim that $z\ne w$ implies both $L^{w,v}_{z,u}$ and $R^{w,z}_{z,u}$ are at most $1$. For $L$, the condition \ref{uniqueness}, Definition \ref{tridefn} implies that at $q,r,s$ are determined by $p$ (if they exist). Furthermore $(z,\sigma(r),\sigma(p)),(p,s,\sigma(w))\in \mathcal{T}$ implies $z$ and $w$ are both incident with $p$ (by conditions \ref{incidence} and \ref{cyclicinvariance}, Definition \ref{tridefn}), so $p\le z\cap w$. But $z\ne w$ implies $z\cap w$ is a proper subspace of both $z$ and $w$, so $\text{dim}(z\cap w)<a$. However $\text{dim}(p)=a-1$ and so $\text{dim}(z\cap w)\ge a-1$, and thus we must have $p=z\cap w$, hence p is uniquely determined. Hence $L^{w,z}_{z,u}=1$ if $\text{dim}(z\cap w)=a-1$ and corresponding $q,r,s$ exist, and $0$ otherwise.

For $R$, note that such a $p^{\prime}$ (if it exists) is an $a+1$ dimensional subspace containing $z$ and $w$, and since $z\ne w$, we must have $p^{\prime}=z+w$ (where by the latter we mean the subspace generated by $z$ and $w$). Therefore $R^{w,v}_{z,u}$ either takes the value $0$ or $1$, as desired.

\medskip

We will now show $R^{w,v}_{z,u}\ne 0$ implies $L^{w,v}_{z,u}\ne 0$ (which by our above argument implies they must both be $1$). If $R^{w,v}_{z,u}\ne 0$ then $p^{\prime}=z+w$ has dimension $a+1$. But $a+1=\text{dim}(z+w)=\text{dim}(z)+\text{dim}(w)-\text{dim}(z\cap w)$, which implies $\text{dim}(z\cap w)=a-1$, hence $z\cap w\in \Pi_{a-1}$. Therefore there exists $r,s\in \Pi_{1}$ such that $(z\cap w, r, \sigma(z)), (z\cap w, s, \sigma(w))\in \mathcal{T}$.

Assume $a<n-1$. Then we can apply Proposition \ref{betterweirdone} to the pairs $$(z\cap w, r, \sigma(z)),\ (z,u,\sigma(z+w))\in \mathcal{T}\ \text{and}\ (z\cap w, s, \sigma(w)),\ (w,v,\sigma(z+w))\in \mathcal{T}$$  to obtain unique $q, x$ such that  $$(r,u,\sigma(q)),\ (z\cap w, q, \sigma(z+w))\in \mathcal{T}\ \text{and}\ (s,v,\sigma(x)),\ (z\cap w, x, \sigma(z+w))\in \mathcal{T}$$  respectively. But by \ref{uniqueness} in the definition of triangle presentation, $q=x$ so we have the triples $(z,\sigma(r),\sigma(z\cap w))$,$(r,u,\sigma(q))$,$(z\cap w,s,\sigma(w))$,$(s,v,\sigma(q))$ are elements of $\mathcal{T}$ hence $L^{w,v}_{z,u}\ne 0$.

If $a=n-1$, then the assumption $R^{w,v}_{z,u}\ne 0$ implies $u=\sigma(z)$ and $v=\sigma(w)$. Since $z\ne w$, $z+w$ is the entire space so we must have $\text{dim}(z\cap w)=n-2$. Then setting $q=\sigma(z\cap w), p=z\cap w$ gives $L^{w,v}_{z,u}\ne 0$ in this case.

\medskip

Now, we'll show $L^{w,v}_{z,u}\ne 0$ implies $R^{w,v}_{z,u}\ne 0$. Again assume $1<a<n-1$. Then we must have $z\cap w\in \Pi_{a-1}$, so there exists $r,s\in \Pi_{1}, q\in \Pi_{2}$ so that the triples $(z,\sigma(r),\sigma(z\cap w))$, $(r,u,\sigma(q))$, $(z\cap w,s,\sigma(w))$, and $(s,v,\sigma(q))$ are elements of $\mathcal{T}$. Our goal is to show $z+w$ lies in $\Pi_{a+1}$ and that the triples $(z,u,\sigma(z+w))$, $(w,v,\sigma(z+w))$ are elements of $\mathcal{T}$.

Notice $z\cap w\in \Pi_{a-1}$ implies $\text{dim}(z+w)=\text{dim}(z)+\text{dim}(w)-\text{dim}(z\cap w)=2a-a+1=a+1$, which is our first criteria. Let $k, l\in \Pi_{1}$ be the unique elements so that $(z,k,\sigma(z+w)),(w,l,\sigma(z+w))\in \mathcal{T}$ (which exist by condition \ref{incidence}, Definition \ref{tridefn}). Then applying Proposition \ref{betterweirdone} to the pairs $$(z\cap w,r,\sigma(z)),(z,k,\sigma(z+w))\in \mathcal{T}\ \text{and}\ (z\cap w,s,\sigma(w)),(w,l,\sigma(z+w))\in \mathcal{T}$$ there are unique $t,h\in \Pi_{2}$ such that $$(r,k,\sigma(t)), (z\cap w, t, \sigma(z+w))\in \mathcal{T}\ \text{and}\ (s,l, \sigma(h)),(z\cap w, h, \sigma(z+w))\in \mathcal{T}$$

But condition \ref{uniqueness}, Definition \ref{tridefn} gives $t=h$ and hence $t$ is incident with both $r$ and $s$. The assumption $z\ne w$ implies $r\ne s$. Since $r,s$ are 1-dimensional and not equal, $t=s+r$. $q\in \Pi_{2}$ is also incident with both $s$ and $r$ and so $q=s+r=t$. Therefore $k=u$ and $l=v$, and $(z,u,\sigma(z+w)),(w,v,\sigma(z+w))\in \mathcal{T}$.

If $a=n-1$, we repeat the same argument, except we directly see $k=\sigma(z), l=\sigma(w)$, and we have directly $t=h=\sigma(z\cap w)=q$ without needing to apply Proposition \ref{betterweirdone}. This concludes Case 1.

\medskip

\textbf{Case 2}: $z=w$. In this case, each of the conditions $L^{z,v}_{z,u}\ne 0$ and $R^{z,v}_{z,u}\ne 0$ independently imply $v=u$. Thus if $v\ne u$, both coefficients are $0$. We assume $v=u$ and calculate 

\begin{align*}
R^{z,u}_{z,u}&=\begin{cases}
a-1 & \sigma(z)\nsim u \\
a & \sigma(z)\sim u
\end{cases}
\end{align*}

and

\begin{align*}
L^{z,u}_{z,u}&=|\{r\in \Pi_{1}\ :\ (z,\sigma(r),\sigma(p)),(r,u,\sigma(q))\in \mathcal{T}\ \text{for some}\ p,q\in \Pi\}|\\
&=|\{r\in \Pi_{1}\ :\ \sigma(r)\sim\sigma(z)\ \text{and}\ \sigma(r)\sim u   \}|
\end{align*}

If $\sigma(z)\nsim u$, then since $\text{dim}(u)=1$, this implies $\sigma(z)\cap u=\{0\}$, and thus $\sigma(z)+u$ is an $n-a+1$ dimensional subspace, and an $n-1$ dimensional subspace $\sigma(r)$ is incident with both $\sigma(z)$ and $u$ if and only if it contains $\sigma(z)+u$. Thus it suffices to count the number of $n-1$ dimensional subspaces containing a given $n-a+1$ dimensional subspaces, which by Lemma \ref{intermediatesubspacenumber} is $\left[ {\begin{array}{c}
   a-1 \\
   a-2  \\
  \end{array} } \right]_{q}$, which in $\mathbbm{k}$ is equal to $a-1$ as desired.

 If $\sigma(z)\sim u$, then $u\le \sigma(z)$, and thus an $n-1$ dimensional subspaces incident to both $\sigma(z)$ and $u$ if and only if it contains the $n-a$ dimensional subspace $\sigma(z)$. Using Lemma \ref{intermediatesubspacenumber}, this is $\left[ {\begin{array}{c}
   a \\
   a-1  \\
  \end{array} } \right]_{q}$ which in $\mathbbm{k}$ reduces to $a$. This concludes the proof of \ref{squareswitcha1}. 

  \medskip

  The argument for \ref{squareswitch1a} has the same basic structure as the argument we've just given for \ref{squareswitcha1}, so we leave it to the reader.

\bigskip

\bigskip

\end{proof}

%%%%%%%%%%%%%%%%%%%%%%%%%%%%%%%%%%%%%%%%%%%%%%%%%%%%%%%%%%%%%%%%%%%%%%%%%%%%%%%%%%%%%%%%%%%%%%%%%%%%%%%%%%%
%%%%%%%%%%%%%%%%%%%%%%%%%%%%%%%%%%%%%%%%%%%%%%%%%%%%%%%%%%%%%%%%%%%%%%%%%%%%%%%%%%%%%%%%%%%%%%%%%%%%%%%%%%%
%%%%%%%%%%%%%%%%%%%%%%%%%%%%%%%%%%%%%%%%%%%%%%%%%%%%%%%%%%%%%%%%%%%%%%%%%%%%%%%%%%%%%%%%%%%%%%%%%%%%%%%%%%%
%%%%%%%%%%%%%%%%%%%%%%%%%%%%%%%%%%%%%%%%%%%%%%%%%%%%%%%%%%%%%%%%%%%%%%%%%%%%%%%%%%%%%%%%%%%%%%%%%%%%%%%%%%%

The following is our main result.

\begin{thm}\label{TriangleSLnThm}
Suppose $\mathbbm{k}$ has characteristic $p\ge n-1$ and $q\equiv 1$ mod $p$. Then linear maps defined in \ref{trianglemerge}-\ref{triangleunivalent2} satisfy the relations \ref{associativity}, \ref{burstingbigons}, \ref{squareswitch1}, \ref{squareswitch2}, \ref{SL-rigid} hence define a monoidal functor $\text{Web}(\text{SL}^{-}_{n})\rightarrow \text{Vec}(\Gamma)$, which composing with the forgetful functor to $\text{Vec}$ yields a fiber functor.
\end{thm}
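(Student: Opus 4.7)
My plan is to define the functor on objects by $a \mapsto V_a$ and on generating morphisms by the formulas \ref{trianglemerge}--\ref{triangleunivalent2}, then verify each defining relation of $\text{Web}(\text{SL}^{-}_n)$ in turn. Because each $V_a$ splits as a direct sum of one-dimensional graded pieces $\mathbbm{k}_u$ indexed by $u \in \Pi_a$ (or just $\mathbbm{k}_1$ when $a = n$), and each generator is a direct sum of triangle morphisms $\lambda^w_{u,v}$ or $\text{y}^{u,v}_w$ with $uv = w$ in $\Gamma_{\mathcal{T}}$, every equation to verify splits according to grading and reduces to a finite combinatorial identity.

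For the (co)associativity relations \ref{associativity}, both sides evaluated on a basis vector $1_u \otimes 1_v \otimes 1_w$ return $1_{uvw}$ by group associativity in $\Gamma_{\mathcal{T}}$, in accordance with \ref{groupassociativity}. The content is compatibility with triangle morphisms: if $(u,v,\sigma(uv))$ and $(uv,w,\sigma(uvw))$ both lie in $\mathcal{T}$, then so must $(v,w,\sigma(vw))$ and $(u,vw,\sigma(uvw))$. This is exactly axiom \ref{theweirdone} (equivalently $6'$ from Proposition \ref{betterweirdone}), as laid out in Remark \ref{interpretweirdone}; the dimension hypothesis is automatic whenever the intermediate labels lie in $\{1,\ldots,n-1\}$, and corner cases involving a label $n$ reduce via the pairing $u \leftrightarrow \sigma(u)$ built into \ref{trianglesplitn}.

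The bigon bursting relation \ref{burstingbigons} reduces to a counting argument. Applied to $1_x \in V_{a+b}$, the composition returns $N_x \cdot 1_x$ where $N_x = |\{(u,v) \in \Pi_a \times \Pi_b : (u,v,\sigma(x)) \in \mathcal{T}\}|$. Using cyclic invariance \ref{cyclicinvariance} together with axioms \ref{incidence}, \ref{mod}, and \ref{uniqueness}, this count equals the number of $a$-dimensional subspaces contained in the fixed $(a+b)$-dimensional subspace $x$, which by Lemma \ref{subspacenumber} applied to $x$ is $\left[\begin{array}{c} a+b \\ a \end{array}\right]_q$. The hypothesis $q \equiv 1 \bmod p$ then reduces this $q$-binomial to the ordinary binomial $\binom{a+b}{a}$, matching the right-hand side of \ref{burstingbigons}.

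For the square switch relations \ref{squareswitch1} and \ref{squareswitch2}, I would invoke Proposition \ref{easytohardsquareswitch} to reduce to the special cases \ref{squareswitcha11} and \ref{squareswitch1aa}, which are identical to \ref{squareswitcha1} and \ref{squareswitch1a} already verified in Lemma \ref{squarelemma1}; the hypothesis $p \ge n-1$ is what makes the inductive step in Proposition \ref{easytohardsquareswitch} go through in the relevant range, with the extremal labels $c,d = n-1$ handled separately by noting that a strand labeled $n$ becomes trivial under the univalent vertex. Finally, relations \ref{univalent} are immediate since the univalent vertex is sent to $\text{id}_{\mathbbm{k}}$, and \ref{SL-rigid} reduces to a one-line computation showing the cup-cap composition sends $1_u \mapsto 1_u$ (only the term with partner $\sigma(u)$ survives the cap). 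The principal obstacle, namely verifying square switch compatibility at the level of the triangle presentation, has already been dispatched by Lemma \ref{squarelemma1}; everything else is bookkeeping.
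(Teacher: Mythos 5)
Your proposal is correct and follows essentially the same route as the paper: associativity via Proposition \ref{betterweirdone} and Remark \ref{interpretweirdone}, bigon bursting via a subspace count reduced mod $p$ (you count $a$-dimensional subspaces inside $x$ via Lemma \ref{subspacenumber} where the paper dually counts $(n-b)$-dimensional subspaces containing $\sigma(x)$ via Lemma \ref{intermediatesubspacenumber} --- same $q$-binomial), the square switch relations via Lemma \ref{squarelemma1} plus Proposition \ref{easytohardsquareswitch} with the extremal cases $c$ or $d$ equal to $n-1$ handled separately, and the $\text{SL}^{-}_{n}$ relation from $\sigma$ being an involution. The only place you are lighter than the paper is the extremal square-switch case, which the paper works out explicitly (reducing via \ref{univalent} and \ref{SL-rigid} to cyclic invariance of $\mathcal{T}$), but your stated mechanism is the right one.
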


\begin{proof}
The coassociativity and associativity relations from equation \ref{associativity} follow from Proposition \ref{betterweirdone} via Remark \ref{interpretweirdone}. The rigid version of the $\text{SL}^{-}_{n}$ relation from equation \ref{SL-rigid} follows directly from the fact that $\sigma$ is an involution. To verify the bigon bursting relation from equation \ref{burstingbigons}, note by the uniqueness condition \ref{uniqueness}, Definition \ref{tridefn}, we have $\begin{tikzpicture}[scale=0.5, baseline = -.1cm]
\draw (0,-1)node[below]{a+b} -- (0,-0.5) ;
\draw (270:0.5) arc (270:90:0.5);
\draw (-90:0.5) arc (-90:90:0.5);
\draw (0,0.5)--(0,1) node[above]{a+b};
\node(none) at (1,0) {a};
\node(none) at (-1,0) {b};
\end{tikzpicture}$ evaluated at $u\in \Pi_{a+b}$ is simply 
$$|\{(v,w,\sigma(u))\in \mathcal{T}\ : (v,w)\in \Pi_{a}\times \Pi_{b}\}| u=|\{w\in \Pi_{b}\ : \sigma(w)\sim \sigma(u)\}|\  u. $$

\noindent But $\sigma(u)$ has dimension $n-a-b$ and $\sigma(w)$ has dimension $n-b$, so we need the number of $n-b$ dimensional subspaces of an $n$ dimensional space containing a fixed $n-a-b$ dimensional space $\sigma(u)$. By Lemma \ref{intermediatesubspacenumber} this is $\left[ {\begin{array}{c}
   a+b \\
   a  \\
\end{array} } \right]_{q}$. But in $\mathbbm{k}$ this reduces to the usual binomial coefficient, which gives \ref{burstingbigons}. It therefore remains to prove the square switch relations \ref{squareswitch1} and \ref{squareswitch2}. The special cases with $1\le c, d\le n-2$ follow from Lemma \ref{squarelemma1} above, combined with Proposition \ref{easytohardsquareswitch}. So it remains to verify the cases with $c$ or $d$ equal to $n-1$.

We consider the case of \ref{squareswitch1} with $d=n-1$, the other 3 cases being similar. In this case, we must have $a=n$, $b=1$, and thus the relation reduces to 

\small

\begin{equation}
 \begin{tikzpicture}[scale=0.7, baseline = -.1cm]
\draw (-0.5,-1) node[below] {n} -- (-0.5,-0.5) ;
\draw (-0.5,-0.5) -- (0.5,-0.2) ;
\draw (-0.5,-0.5) -- (-0.5,0.5);
\draw (-0.5,0.5) -- (-0.5,1) node[above] {c+1};
\draw (0.5,0.2) -- (-0.5,0.5);
\draw (0.5,-1) node[below] {1} -- (0.5,-0.2);
\draw (0.5,-0.2)--(0.5,0.5);
\draw (0.5,0.5)--(0.5,1) node[above] {n-c} ;
\node(none) at (0,0.7) {c};
\node(none) at (0,-0.7) {n-1};
\end{tikzpicture}= \ \begin{tikzpicture}[scale=0.7, baseline = -.1cm]
\draw (-0.6,-1) node[below]{n}--(-0.6,1) node[above]{c+1};
\draw (0.6,-1) node[below]{1}--(0.6,1) node[above]{n-c};
\draw (-0.6,0.2)--(0.6,0.5);
\node(none) at (0,0) {n-1-c};
\end{tikzpicture}
\end{equation}

\normalsize

Using the $\text{SL}^{-}_{n}$ relations \ref{univalent} and \ref{SL-rigid} this equation becomes

\small

\begin{equation}
 \begin{tikzpicture}[scale=0.7, baseline = -.1cm]
\draw (-0.5,-1) node[below]{1}--(-0.5,1) node[above]{c+1};
\draw (-0.5,0.6) -- (0.5,0.2);
\draw (0.5,0) -- (0.5,1) node[above]{n-c};
\mydotw{(0.5,0)};
\end{tikzpicture}= \ \begin{tikzpicture}[scale=0.7, baseline = -.1cm]
\draw (0.5,-1) node[below]{1}--(0.5,1) node[above]{n-c};
\draw (-0.5,0) --(-0.5,1) node[above]{c+1};
\draw(0.5,0.5)--(-0.5,0.2);
\mydotw{(-0.5,0)};
\end{tikzpicture}
\end{equation}.

\normalsize

To verify this, let $z\in \Pi_{1}, w\in \Pi_{c+1}$ and $u\in \Pi_{n-c}$, and let $L^{w,u}_{z}$ and $R^{w,u}_{z}$ denote the coefficient of the basis element $w\otimes u$ of the image of the vector $z$ under the left-hand and right-hand sides of the equations (as in proof of \ref{squarelemma1}). Then 

\begin{align*}
L^{w,u}_{z}&=\begin{cases}
1 & (z,\sigma(u),\sigma(w))\in \mathcal{T} \\
0 & (z,\sigma(u),\sigma(w))\notin \mathcal{T}
\end{cases}
\end{align*}

\begin{align*}
R^{w,u}_{z}&=\begin{cases}
1 & (\sigma(w),z,\sigma(u))\in \mathcal{T} \\
0 & (\sigma(w),z,\sigma(u))\notin \mathcal{T}
\end{cases}
\end{align*}

Thus $L^{w,u}_{z}=R^{w,u}_{z}$ by cyclic invariance (\ref{cyclicinvariance}, Definition \ref{tridefn}).

\end{proof}

\begin{rem}\label{standard} It is straightforward to verify the above construction applied to the degenerate $(q=1)$ triangle presentations of Example \ref{degenerate} works in characteristic $0$. We call these fiber functors \textit{standard}. Note that even when $n$ is odd so that $\text{Web}(\text{SL}^{-}_{n})=\text{Web}(\text{SL}^{+}_{n})$, the fiber functors defined by these degenerate triangle presentations are not the usual ones arising from the forgetful functor $\text{Web}(\text{SL}^{+}_{n})\rightarrow \text{Vec}$ as described above (Remark \ref{tilting}). They have a purely combinatorial flavor. They do preserve the classical dimensions of objects.
\end{rem}

\subsection{Image of the crossing}\label{crossing}

 Even though the crossing generator in the category $\text{Web}(\text{SL}^{-}_{n})$ is not a braiding in general, it still satisfies the Yang-Baxter equation (since it is a braiding in the non-full subcategory $\text{PolyWeb}(\text{GL}_{n})$ ). Our functors do not map this crossing to the usual factor swapping permutation (or any signed twistings of this) and are significantly more complicated in general. Therefore the image of the crossing of any object with itself yields interesting solutions to the (quantum) Yang-Baxter equation which are in addition \textit{involutive} (their square is the identity), which to our knowledge are typically studied in the context of set-theoretical solutions to the Yang-Baxter equation \cite{MR1722951}. We will provide a description of the image of the braid as a linear map. First recall

 \medskip

 \begin{equation}
\check{R}:=\begin{tikzpicture}[scale=0.7, baseline = -.1cm]
\draw  (-0.5,-1) node[below] {1} -- (0.5,1);
\draw  ( 0.5,-1)node[below] {1} -- (-0.5,1) ;
\end{tikzpicture}=\ \ \  \begin{tikzpicture}[scale=0.7, baseline = -.1cm]
\draw (-0.5,-1) node[below]{1} -- (0,-0.5)  ;
\draw (0.5,-1) node[below]{1} -- (0,-0.5) ;
\draw (0,-0.5) -- (0,0.5) ;
\draw (0,0.5) -- (-0.5,1) ;
\draw (0,0.5) -- (0.5,1) ;
\end{tikzpicture}\ \ \ \ 
- \begin{tikzpicture}[scale=0.7, baseline = -.1cm]
\draw (-0.5,-1) node[below] {1} -- (-0.5,1) ;
\draw (0.5,-1) node[below] {1} -- (0.5,1);
\end{tikzpicture}
\end{equation}

\medskip

Let $\mathcal{T}$ be an $n=3$ triangle presentation over a projective plane $\Pi=\Pi_{1}\cup \Pi_{2}$. Let $(u,v),(z,w)\in \Pi_{1}\times \Pi_{1}$. We write $(u,v)\approx (z,w)$ if there exists $s\in \Pi_{1}$ with $(u,v,s),(z,w,s)\in \mathcal{T}$. Note such an $s$ is unique if it exists. 

To get a feeling for this relation consider the setup from Example \ref{numbertheoryex}, and let $D\subseteq \mathbbm{Z}/N\mathbbm{Z}$ denote a standard planar difference set. Then for $m,n\in \Pi_{1}=\mathbbm{Z}/N\mathbbm{Z}$, $\sigma(m)\sim n$ if and only if $n-m\in D.$ Then $(m,n)\approx (m^{\prime}, n^{\prime})$ if and only if the following equation linear is satisfied:

$$m+(q+1)(m-n)=m^{\prime}+(q+1)(m^{\prime}-n^{\prime})$$

We can explicitly write $\check{R}$ in terms of $\approx$ as follows:

\begin{align*}\displaystyle
\check{R}(u\otimes v) &=\begin{cases}
\displaystyle \sum_{\substack{(z,w)\approx (u,v)\\
                  (z,w)\ne (uv)}} z\otimes w \ \ \ \  & \sigma(u)\sim v \\
                  & \\
-u\otimes v & \sigma(u)\nsim v \\
\end{cases}
\end{align*}

\begin{comment}
Writing 

$$\check{R}(u\otimes v)=\sum_{z,w\in \Pi_{1}} \check{R}^{z,w}_{u,v} z\otimes w.$$

\noindent we have

\begin{align*}
\check{R}^{z,w}_{u,v} &=\begin{cases}
1 & (u,v)\approx (z,w),\ (z,w)\ne (u,v) \\
0 & (u,v)\approx (z,w), \  (z,w)=(u,v) \\
0 & (u,v)\not\approx (z,w), \  (z,w)\ne (u,v) \\
-1 & (u,v)\not\approx (z,w), \  (z,w)=(u,v), \\
\end{cases}
\end{align*}

Another way to write this is

\end{comment}

Notice that the summation has exactly $q$ non-zero terms when $\sigma(u)\sim v$ and only one non-zero term otherwise. If we picked an ordering on the basis vectors $\Pi_{1}\times \Pi_{1}$, then the associated matrix would be sparse: each column either has $-1$ on the diagonal or exactly $q$ off diagonal terms. There are $(q^{2}+q+1)^{2}$ columns, so the density (or ratio of non-zero matrix entries to all entries) is less then $\frac{q}{(q^{2}+q+1)^{2}}$. It would be interesting to engage in a systematic study of these solutions.

In the introduction, we claimed our solutions where ``positive characteristic'' deformations of easy solutions. We interpret this to mean that when applied to the degenerate triangle presentation of Example \ref{degenerate}, we should obtain a well known, boring solution to the Yang-Baxter equation. To describe these solutions, let $V$ be a vector space, and let $B:=\{v_1, \cdots, v_{n}\}$. Let $\epsilon_{1},\epsilon_{2}\in \{\pm \}$. Then we can define $\check{R}_{\epsilon_1,\epsilon_{2}}(v_{1}\otimes v_{2})$

\begin{align*}\displaystyle
\check{R}_{\epsilon_1,\epsilon_{2}}(v_{1}\otimes v_{2}) &=\begin{cases}
 \epsilon_{1} v_{2}\otimes v_{1} \ \ \ \  & v_{1}\ne v_{2} \\
                     & \\
\epsilon_{2} v_{1}\otimes v_{2}          & v_{1}=v_{2} \\
\end{cases}
\end{align*}

$R_{++}$ and $R_{--}$ are the standard swap solution $P$ and its negative. Clearly $\check{R}^{2}=\text{id}_{V\otimes V}$. To see that this is in general solution, it is well known that $\check{R}$ satisfies our version of the Yang-Baxter equation if and only if $R:=P\circ \check{R}$ satisfies the ``scattering matrix'' version of the equation:

$$ R_{12} R_{13} R_{23}=R_{23} R_{13} R_{12} $$

\noindent where the pair of subscripts denote which factors the operator $R\in \text{End}(V\otimes V)$ acts on in $V\otimes V\otimes V$. But in our case, each $R_{ij}$ is diagonal with respect to the basis $B\times B\times B$, so the equation is satisfied. Applying our construction to the degenerate triangle presentation for $n=3$ and $a=1$ yields $R_{+-}$. In our positive characteristic examples, verifying the Yang-Baxter directly relation is much less trivial.

\begin{comment}
There is also a canonical ``braided" bialgebra internal to the (Karoubi completion of) $\text{PolyWeb}(\text{GL}_{n})$. Indeed, (interpreting $0$ as the monoidal unit of $\text{PolyWeb}(\text{GL}_{n}))$ set 

$$A:=\bigoplus_{0\le a \le n} a $$

Then define the multiplication $m$ to be the direct sum of the admissible merging vertices, and comultiplication $\Delta$ the direct sum of the admissible splitting vertices. Then \ref{associativity} precisely guarantees associativity of $m$ and coassociativity of $\Delta$. Condition \ref{bialgebra} precisely gives us that that these structure form a bialgebra.

Pushing this forward along the fiber functor gives us an actual algebra and coalgebra structure on $F(A)\in \text{Vec}$. Since the functor is not braided, these are not compatible in the usual sense for a bialgebra. However, if we replace the ordinary swap map in $\text{Vec}$ with the image of the crossing \ref{braiddefine} in the definition of bialgebra, then these relations are satisfied. It would be interesting to obtain this graded bialgebra is related to some sort of cohomology ring of the corresponding building, and to understand aspects of its representation theory.

\end{comment}

\bibliographystyle{amsalpha}
{\footnotesize{
\bibliography{bibliography}
}}

\end{document}